\numberwithin{equation}{section}
\setlist[enumerate]{leftmargin=.5in}
\setlist[itemize]{leftmargin=.5in}
\definecolor{bleudefrance}{rgb}{0.19, 0.55, 0.91}
\definecolor{trueblue}{rgb}{0.0, 0.45, 0.81}
\definecolor{columbiablue}{rgb}{0.61, 0.87, 1.0}
\definecolor{lightgoldenrodyellow}{rgb}{0.98, 0.98, 0.82}
\newlength{\Oldarrayrulewidth}
\newcommand{\Cline}[2]{
  \noalign{\global\setlength{\Oldarrayrulewidth}{\arrayrulewidth}}
  \noalign{\global\setlength{\arrayrulewidth}{#1}}\cline{#2}
  \noalign{\global\setlength{\arrayrulewidth}{\Oldarrayrulewidth}}}
\newtheorem{lemma}{Lemma}
\newtheorem{theorem}{Theorem}
\newcommand*\samethanks[1][\value{footnote}]{\footnotemark[#1]}
\def\@fnsymbol#1{\ensuremath{\ifcase#1\or \dagger\or \ddagger\or
   \mathsection\or \mathparagraph\or \|\or **\or \dagger\dagger
   \or \ddagger\ddagger \else\@ctrerr\fi}}
\definecolor{MSBlue}{rgb}{.204,.353,.541}
\titleformat*{\section}{\Large\bfseries\sffamily\color{black}}
\titleformat*{\subsection}{\large\bfseries\sffamily\color{black}}
\renewenvironment{abstract}{
  \quotation
  \textbf{\textsf{\color{black}{\abstractname.}}} 
  \sffamily
}{\endquotation}
\def\0{\mathbf{0}}
\def\1{\mathbf{1}}
\def\c{\mathbf{c}}
\def\f{\mathbf{f}}
\def\bf{\mathbbm{f}}
\def\g{\mathbf{g}}
\def\h{\mathbf{h}}
\def\p{\mathbf{p}}
\def\r{\mathbf{r}}
\def\s{\mathbf{s}}
\def\v{\mathbf{v}}
\def\x{\mathbf{x}}
\def\y{\mathbf{y}}
\def\z{\mathbf{z}}
\def\L{\mathcal{L}}
\def\R{\mathbb{R}}
\def\A{\mathcal{A}}
\def\E{\mathcal{E}}
\def\F{\mathcal{F}}
\def\M{\mathcal{M}}
\def\V{\mathcal{V}}
\def\I{\mathtt{I}}
\def\B{\mathtt{B}}
\def\C{\mathtt{C}}
\def\X{\mathtt{X}}
\def\Y{\mathtt{Y}}
\def\d{\mathrm{d}}
\def\btheta{\boldsymbol{\theta}}
\def\diag{{\mathrm{diag}}}
\def\trace{\mathrm{trace}}
\newcommand{\argmin}{\operatornamewithlimits{argmin}}
\newcommand{\mean}{\operatornamewithlimits{mean}}
\newcommand{\SD}{\operatornamewithlimits{SD}}
\title{ 
\bfseries\sffamily\color{black}{Energy-Based Distortion-Balancing Parameterization for Open Surfaces}
}
\author{
\sffamily\color{black}
{Shu-Yung Liu\thanks{\footnotesize Department of Mathematics, National Taiwan Normal University, Taipei, Taiwan (\href{mailto:lii227857@gmail.com}{lii227857@gmail.com}, \href{mailto:yue@ntnu.edu.tw}{yue@ntnu.edu.tw})}
~and Mei-Heng Yueh\samethanks[1] }  
}
\date{}
\begin{document}
\begin{sloppypar}

\captionsetup[figure]{labelfont={bf,sf},name={Fig.},labelsep=period} 
\captionsetup[table]{labelfont={bf,sf},name={Table},labelsep=period} 

\maketitle

\begin{abstract}
Surface parameterization is a fundamental concept in fields such as differential geometry and computer graphics. It involves mapping a surface in three-dimensional space onto a two-dimensional parameter space. This process allows for the systematic representation and manipulation of surfaces of complicated shapes by simplifying them into a manageable planar domain. In this paper, we propose a new iterative algorithm for computing the parameterization of simply connected open surfaces that achieves an optimal balance between angle and area distortions. We rigorously prove that the iteration in our algorithm converges globally, and numerical results demonstrate that the resulting mappings are bijective and effectively balance angular and area accuracy across various triangular meshes. Additionally, we present the practical usefulness of the proposed algorithm by applying it to represent surfaces as geometry images.

\bigskip
\textbf{Keywords.} simplicial surface, simplicial mapping, distortion-balancing parameterization, numerical optimization

\medskip
\textbf{AMS subject classifications.} 65D18, 68U05, 68U01, 65D17
\end{abstract}

\section{Introduction}
A surface parameterization is a bijective mapping that maps a surface to a canonical domain in $\mathbb R^2$. It plays a crucial role in simplifying various tasks in computer graphics, such as surface resampling, re-meshing, and registration \cite{LaLu14,LuLY14,YoMY14,YuLW17}. Comprehensive overviews of surface parameterization methods and their applications can be found in survey papers \cite{FlHo05,ShPR06,HoLe07}. An ideal parameterization preserves as much of the original surface's geometric properties as possible. Most existing approaches focus on either angle-preserving (conformal) or area-preserving (authalic) parameterizations.

For computing a conformal parameterization, feasible approaches have been established, such as harmonic energy minimization \cite{GuYa08,HuGL14,HuGH14,MuTA08}, Laplacian operator linearization \cite{AnHT99,HaAT00}, angle-based flattening method \cite{ShSt01,ShLM05}, heat diffusion on the double-covered surface \cite{GuYa08}, the heat diffusion by the quasi-implicit Euler method \cite{HuGH14}, the algorithm based on the composition of Cayley transforms and quasi-conformal maps \cite{ChLu15}, the linear algorithm for computing conformal parameterizations \cite{ChLu18}, conformal energy minimization (CEM) \cite{YuLW17,KuLY21} and the constructive algorithm for disk conformal parameterizations \cite{LiYu22}. On the other hand, for computing authalic parameterizations, available numerical methods include stretch-minimizing method \cite{SaSG01,YoBS04}, optimal mass transportation (OMT) method \cite{DoTa10,ZhSG13,SuCQ16}, diffusion-based method \cite{ZoHG11,ChRy18}, the stretch energy minimization (SEM) \cite{YuLW19,Yueh23}, and the authalic energy minimization (AEM) \cite{LiYu24}.

One of the practical applications of parameterization is geometry images 
\cite{GuGH02}. 
A geometry image is a method used in computer graphics and geometric processing to represent the geometry of a 3D surface in a planar image format. This approach begins by parameterizing the surface onto a rectangular planar domain. Once parameterized, geometric information, such as vertex positions, normals, or other attributes, is sampled at regular intervals to form a uniform grid. Each grid point corresponds to a specific location on the surface and stores its geometric properties in 3D space. The sampled data is then encoded into a planar image, where pixel values represent the geometric properties of the surface. For instance, the RGB channels of an image can encode the $x$, $y$, and $z$ coordinates of the surface vertices in 3D. Such an image can reconstruct the original 3D surface by extracting the vertex positions from the image and mapping them back to 3D space. Different parameterizations of the same surface naturally result in different geometry images. Conformal parameterization tends to have a better quality of triangles in the reconstructed surface, while authalic parameterization promotes a more uniform distribution of triangles across the surface.

The distortion-balancing parameterization seeks a trade-off between minimizing angular distortion and area distortion, as it is typically impossible to minimize both simultaneously. Different studies approach this trade-off in various ways. In \cite{NaSZ17}, the trade-off is expressed by the measure $\mu_t$, a convex combination of the initial conformal image area element and the original area element in the optimal mass transportation (OMT) process, with a trade-off value set at $t = 0.5$, meaning an equal balance between the two. In contrast, the trade-off of \cite{ChSh24} is a convex combination of the Beltrami coefficients of conformal and area-preserving mappings, referred to as the balanced Beltrami coefficient $\mu_B$. The corresponding mapping is computed using the Laplace-Beltrami solver with the coefficient of the convex combination determined by the surface reconstruction error. For energy-minimization-based methods, \cite{Yueh23} balances the distortion by minimizing the sum of conformal and stretch energy, referred to as balanced energy. Meanwhile, \cite{YuHL20} formulate to a max-min problem. 

To the best of our knowledge, no existing study has achieved a trade-off that equalizes both angular and area distortion while adjusting this balance for different triangular meshes, likely due to challenges in measuring both distortions. In this paper, we address this issue by defining the trade-off as the equality between conformal and authalic energy, formulating it as a constrained optimization problem in which the trade-off value is determined dynamically during the optimization process. We also propose an algorithm to solve this constrained optimization with guaranteed global convergence. Additionally, we apply our resulting parameterization to geometry images, demonstrating its practical application.

\subsection{Contribution}
The contributions of this paper are three-fold.
\begin{itemize}
    \item[(i)] We formulate the disk-shaped distortion-balancing parameterization as a constrained optimization problem and propose an efficient algorithm to solve this problem with guaranteed global convergence. 
    \item[(ii)] Numerical experiments indicate that our proposed method achieves parameterizations with well-balanced distortions between angle and area across various triangular meshes.
    \item[(iii)] We demonstrate the practical application of distortion-balancing parameterization in representing surfaces in 3-dimensional space as planar images, known as geometry images.
\end{itemize}

\subsection{Notation}
In this paper, we use the following notations.
\begin{itemize}
\item Real-valued vectors are represented using bold letters, such as $\mathbf{f}$, and real-valued matrices are represented using uppercase letters, such as $L$, or blackboard bold lowercase letters, such as $\bf$.
\item Ordered sets of indices are denoted using typewriter letters, such as $\mathtt{I}$ and $\mathtt{B}$.
\item The $i$th entry of the vector $\mathbf{f}$ is denoted by $\mathbf{f}_i$, and the subvector of $\mathbf{f}$, composed of $\mathbf{f}_i$ for $i$ in the set of indices $\mathtt{I}$, is represented as $\mathbf{f}_\mathtt{I}$.
\item The $(i,j)$th entry of the matrix $L$ is represented by $L_{i,j}$, and the submatrix of $L$, composed of entries $L_{i,j}$ for $i$ in the set of indices $\mathtt{I}$ and $j$ in the set of indices $\mathtt{J}$, is denoted by $L_{\mathtt{I},\mathtt{J}}$.
\item The $i$th row of the matrix $\bf$ is represented by $\bf_i$, the $j$th colume of the matrix $\bf$ is represented by $\bf^j$, and the submatrix of $\bf$, composed of the row of the matrix $\bf_i$ for $i$ in the set of indices $\I$, is denoted by $\bf_\I$.
\item The set of real numbers is represented by $\mathbb{R}$.
\item The $k$-simplex with vertices ${v}_0, \ldots, {v}_k$ is denoted by $\left[{v}_0, \ldots, {v}_k\right]$.
\item The area of a $2$-simplex $\left[{v}_0, v_1, {v}_2\right]$ is represented by $|\left[{v}_0, v_1, {v}_2\right]|$, and the area of a simplicial $2$-complex $\M$ is denoted by $|\M|$.
\item The zero and one vectors or matrices of appropriate sizes are denoted by $\mathbf{0}$ and $\mathbf{1}$, respectively.
\end{itemize}

\subsection{Organization of the Paper}
The paper is organized as follows. In Sect. \ref{sec:2}, we introduce conformal energy and authalic energy, defined on simplicial mappings, along with their properties corresponding to conformal and authalic mappings, respectively. Sect. \ref{sec:3} presents the formulation of the distortion-balancing mapping problem as a constrained optimization problem and introduces the associated algorithm. The convergence of the proposed method is discussed in Sect. \ref{sec:4}, followed by numerical experiments in Sect. \ref{sec:5}. In Sect. \ref{sec:6}, we explore the application of geometry images using the distortion-balancing parameterization and present numerical results. Finally, we conclude the paper in Sect. \ref{sec:7}.

\section{Energy for Simplicial Mapping}
\label{sec:2}

A simplicial surface $\M$ embedded in $\R^3$ is composed of $n$ vertices
\begin{subequations} \label{eq:mesh}
\begin{equation}
\mathcal{V}(\M) = \left\{ v_\ell = (v_\ell^1, v_\ell^2, v_\ell^3) \in\mathbb{R}^3 \right\}_{\ell=1}^n,
\end{equation}
$m$ oriented triangular faces 
\begin{equation}
\mathcal{F}(\M) = \left\{ \tau_s = [ {v}_{i_s}, {v}_{j_s}, {v}_{k_s} ] \subset\R^3 \mid {v}_{i_s}, {v}_{j_s}, {v}_{k_s}\in\mathcal{V}(\M) \right\}_{s=1}^m,
\end{equation}
and edges
\begin{equation}
\E(\M) = \left\{ [v_i,v_j] \subset\R^3 \mid [v_i,v_j,v_k]\in\mathcal{F}(\M) \right\},
\end{equation}
where the bracket $[{v}_{i_s}, {v}_{j_s}, {v}_{k_s}]$ denotes a $2$-simplex, i.e., a triangle with vertices being ${v}_{i_s}, {v}_{j_s}, {v}_{k_s}$. 
\end{subequations}

\begin{figure}
\centering
\includegraphics{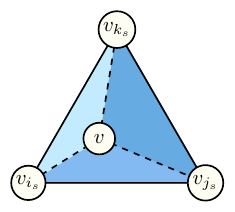}
\caption{An illustration of the areas $|[v,v_{j_s},v_{k_s}]|$, $|[v_{i_s},v,v_{k_s}]|$ and $|[v_{i_s},v_{j_s},v]|$ in barycentric coordinates on a triangular face.}
\label{fig:barycentric}
\end{figure}

A simplicial mapping $f:\M\to\R^2$ is a piecewise affine mapping, which maps to a simplicial complex $f(\M)$, composed of vertices $\V(f(\M))$, edges $\E(f(\M))$, and triangular faces $\F(f(\M))$ as in \eqref{eq:mesh}. 
Specifically, for each $v_i \in \V(\M)$, the vertices of $f(\M)$, 
$$
f(v_i) := \big( f^1(v_i), f^2(v_i) \big)^\top. 
$$
For the point $v \in \tau_s$ for every $\tau_s\in\F(\M)$, the mapping $f(v)$ is determined by vertices $ f(v_{i_s})$, $f(v_{j_s})$, and $f(v_{k_s})$, which can be expressed by barycentric coordinate as follows
$$
f|_{\tau_s}(v) =  \frac{1}{|\tau_s|} \Big(|[v,v_{j_s},v_{k_s}]| \, f(v_{i_s}) + |[v_{i_s},v,v_{k_s}]| \, f(v_{j_s}) + |[v_{i_s},v_{j_s},v]| \, f(v_{k_s}) \Big),
$$
where $|\tau_s|$ denote the area of the triangle $\tau_s$, and the coefficients $|[v,v_{j_s},v_{k_s}]|$, $|[v_{i_s},v,v_{k_s}]|$ and $|[v_{i_s},v_{j_s},v]|$ denote the area of the triangles $[v,v_{j_s},v_{k_s}]$, $[v_{i_s},v,v_{k_s}]$ and $[v_{i_s},v_{j_s},v]$, respectively, illustrated in Fig. \ref{fig:barycentric}.
Thus, the simplicial mapping $f$ can be represented as a matrix
\begin{equation} \label{eq:matrix_f}
\bf \equiv 
\begin{bmatrix}
    f^1(v_1)   &   f^2(v_1)\\
    \vdots     &   \vdots\\
    f^1(v_n)   &   f^2(v_n)
\end{bmatrix} \equiv 
\begin{bmatrix}
    \bf_1 \\
    \vdots\\
    \bf_n
\end{bmatrix} \equiv
\big[ \bf^1, ~\bf^2 \big] 
\in \R^{n \times 2}.
\end{equation}

The conformal and authalic energies, introduced in Sect. \ref{sec:2.1}, are used to measure the mapping's global angle-preserving and area-preserving properties.

\subsection{Conformal and Authalic Energy}
\label{sec:2.1}
Conformal maps (angle-preserving maps) can be computed by minimizing the Dirichlet energy. The discrete Dirichlet energy functional \cite{GuYa08} for a simplicial mapping $f: \mathcal{M} \rightarrow \mathbb R^2$ is defined by
\[
E_D(\bf) = \frac{1}{2} \trace (\bf^\top L_D \bf) = \frac{1}{2} \sum_{s = 1, 2} {\bf^s}^\top L_D \bf^s
\]
where $\bf$ is the matrix representation of the simplicial mapping $f$ as \eqref{eq:matrix_f}, and the cotangent Laplacian matrix $L_D$ is given by
\begin{equation} \label{eq:Ld}
[L_D]_{i, j} = 
\begin{cases}
    -\frac{1}{2} (\cot\theta_{i, j}^k + \cot \theta_{j, i}^\ell)    &\text{if}~ [v_i, v_j] \in \mathcal{E(M)}\\
    -\sum_{\ell \neq i} [L_D]_{i, \ell}     &\text{if}~j=i\\
    0       &\text{otherwise}
\end{cases}
\end{equation}
in which $\theta_{i,j}^k$ is the angle opposite to the edge $[v_i, v_j]$ at the point $v_k$, as illustrated in Fig. \ref{fig:cot}. It is known that $E_D(\bf) \geq |f(\M)|$ and the equality holds if and only if $f$ is a conformal map \cite{Hutc91}, where $|f(\M)|$ is the image area. Once boundary condition $f|_{\partial \M}$ is given, the minimizer of the Dirichlet energy functional $E_D(\bf)$ is a harmonic map. A conformal map is a harmonic map with the appropriate boundary condition such that $E_D(f) = |f(\M)|$. We denote the image area $|f(\M)|$ by the area measurement $\A(\bf)$. The conformal energy of $f$ is thus defined as 
\begin{equation}
E_C(\bf) = E_D(\bf) - \A (\bf), \label{eq:Ec}
\end{equation}
which attains a minimum value of 0

\begin{figure}[]
\centering
\includegraphics{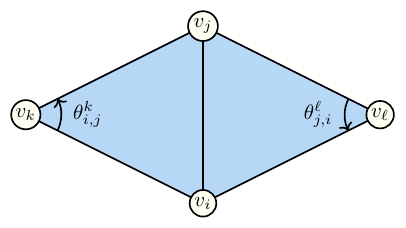}
\caption{An illustration of the angle $\theta_{i,j}^k$ and $\theta_{i,j}^\ell$ defined on the surface $\M$.}
\label{fig:cot}
\end{figure}

Inspired by Dirichlet energy functional, to extend to the authalic maps (area-preserving maps), Yuel et al. \cite{YuLW19} modify the Laplacian matrix from domain to the image, refer as the stretch Laplacian matrix $L_S(\bf)$, defined by 
\begin{equation} \label{eq:Ls}
[L_S(\bf)]_{i, j} = 
\begin{cases}
    -\frac{1}{2} \big(\frac{\cot(\theta_{i, j}^k(\bf))}{\sigma_{\bf([v_i, v_j, v_k])}} + \frac{\cot (\theta_{j, i}^\ell (\bf))}{\sigma_{\bf([v_j, v_i, v_\ell])}} \big)   &\text{if}~ [v_i, v_j] \in \mathcal{E(M)}\\
    -\sum_{\ell \neq i} [L_S(\bf)]_{i, \ell}     &\text{if}~j=i\\
    0       &\text{otherwise}
\end{cases}
\end{equation}
in which $\theta_{i,j}^k (\bf)$ is the angle opposite to the edge $[\bf_i, \bf_j]$ at the point $\bf_k$, and $\sigma_{\bf}([v_i, v_j, v_k])$ is the stretch factor of $f$ on the triangular face $[v_i, v_j, v_k]$ by
$$
\sigma_{\bf}([v_i, v_j, v_k]) = \frac{|[v_i, v_j, v_k]|}{|[\bf_i, \bf_j, \bf_k]|}.
$$
With this modification, the stretch energy is thus defined as 
\[
E_S(\bf) = \frac{1}{2} \trace (\bf^\top L_S(\bf) \bf) = \frac{1}{2} \sum_{s = 1, 2} {\bf^s}^\top L_S(\bf) \bf^s.
\]
Unexpectedly, the stretch energy exhibits properties similar to those of the Dirichlet energy. It has been shown that $E_S(\bf) \geq |f(\M)|$ and the equality holds if and only if $f$ is an authalic map under the assumption that the total area of the surface remains unchanged \cite{Yueh23}. i.e. $|\M| = |f(\M)|$, where $|\M|$ is the area of surface $\M$. The normalization of the total area of the surface results in the absence of the assumption, which inspired Liu et al \cite{LiYu24} to define the authalic energy as  
\begin{equation}
E_A(\bf) = \frac{|\M|}{\A(\bf)} E_S(\bf) - \A(\bf),
\label{eq:Ea}
\end{equation}
which attains a minimum value of 0, with the minimizer being an authalic map without the constraint of the total area.

\section{Augmented Lagrangian Method for Distortion-Balancing Parameterization}
\label{sec:3}
To balance the conformal and authalic while minimizing them, we formulate the problem to the constrained optimization as 
\begin{equation}
    \min_f E_C(\bf) ~~\text{subject to}~ E_A(\bf) = E_C(\bf).
\label{eq:obj_fun}
\end{equation}
The Lagrange function of the objective problem is 
\begin{align}
    \L(\bf, \lambda) &= E_C(\bf) + \lambda (E_A(\bf) - E_C(\bf)). \label{eq:Lag}
\end{align}
We can regard the constrained minimization problem \eqref{eq:obj_fun} by minimizing the 
 discontinuous function $F(\bf)$, which is defined by 
\begin{align*}
    F(\bf)  &= \max_{\lambda} \L (\bf, \lambda) \\ 
            &= \max_\lambda \{ E_C(\bf) + \lambda (E_A(\bf) - E_C(\bf)) \}\\
            &=
            \begin{cases}
             E_C(\bf)  &  \text{if } E_A(\bf) = E_C(\bf)\\
             \infty &   \text{otherwise}
            \end{cases}
\end{align*}
and thus
\[
\min_\bf F(\bf) = \min_\bf \max_{\lambda} \L(\bf, \lambda) = \min_\bf E_C(\bf) ~~\text{subject to}~ E_A(\bf) = E_C(\bf).
\]
Suppose the strong duality holds, then we obtain
\begin{equation}
\min_\bf \max_{\lambda} \L(\bf, \lambda) = \max_{\lambda} \min_\bf \L(\bf, \lambda). \label{eq:max min}
\end{equation}

The augmented Lagrangian method \cite{CoGT91} is an iterative method to solve $\max_{\lambda} \min_\bf \L(\bf, \lambda)$. The augmented Lagrangian is defined with an additional penalty term, given by
\begin{equation}
    \L_A(\bf, \lambda, \rho) = E_C(\bf) + \lambda (E_A(\bf) - E_C(\bf)) + \frac{\rho}{2} (E_A(\bf) - E_C(\bf))^2, \label{eq:AugLag}
\end{equation}
where $\rho > 0$ is a penalty parameter. The penalty plays a crucial role. As long as the estimate of $\lambda$ remains reasonably accurate, driving the penalty parameter $\rho^{(k)} \uparrow \infty$ leads to convergence, as in the quadratic penalty method \cite{GoNi89}. The common approach for updating $\lambda$ is the first-order estimate with the step size $\rho$. In $k$-th iteration,
\[
\lambda^{(k+1)} \leftarrow \lambda^{(k)}  + \rho^{(k)} \big(E_A(\bf^{(k+1)}) - E_C(\bf^{(k+1)}) \big).
\]
As a result, the augmented Lagrangian method for solving disk-shaped mapping $f$ can be summarized by the following key steps. 
\begin{subequations} \label{eq:pseudo_alg}
\begin{align}
     \bf^{(k+1)} &\leftarrow \argmin_\bf \Big\{ \L_A(\bf, \lambda^{(k)}, \rho^{(k)}) ~\Big|~ \bf_i \in \mathbb D ~\mbox{for}~ i = 1,\cdots, n \Big\}\label{eq:f update}\\
    \lambda^{(k+1)} &\leftarrow \lambda^{(k)} + \rho^{(k)}\, \big(E_A(\bf^{(k+1)}) - E_C(\bf^{(k+1)})\big) \label{eq:lambda update}\\
    \rho^{(k+1)} &\leftarrow \tau \rho^{(k)}, \label{eq:rho update}
\end{align}
\end{subequations}
where $\mathbb D$ is a unit disk and $\bf_i$ is defined as \eqref{eq:matrix_f}, providing the amplifier factor $\tau > 1$. 

The disk-shaped constraint in \eqref{eq:f update} can be implemented by representing the boundary points in polar coordinates with a fixed radius of 1. The associated energy and gradient details are presented in Sect. \ref{sec:2.1}. Using this formulation, the minimization in \eqref{eq:f update} can be efficiently solved using the preconditioned nonlinear conjugate gradient (CG) method described in Sect. \ref{sec:3.2}. To ensure that the first-order estimate of $\lambda$ behaves appropriately, it is essential to provide proper criteria for the update procedures \eqref{eq:lambda update} and \eqref{eq:rho update}, ensuring the global convergence \eqref{eq:f update}-\eqref{eq:rho update}, which is explained in Sect. \ref{sec:3.3}. Furthermore, providing a square-shaped version is valuable for practical applications discussed in Sect. \ref{sec:3.4}.

\subsection{Reformulation in Polar Coordinate}
\label{sec:3.1}
To ensure the disk-shaped mapping $f$, we first introduce the index sets of the interior and boundary vertices, 
\begin{equation} \label{eq:BI}
\B = \{ b \mid v_b \in \partial\M \} ~ \text{ and } ~ \I = \{1, \ldots, n\}\backslash\B,
\end{equation}
with the number $n = n_\I + n_\B \equiv \#(\I) + \#(\B)$. Since the gradient of Dirichlet energy and stretch energy \cite{Yueh23} is 
\[
\nabla_{\bf^s} E_D(\bf) = L_D \bf^s, ~~~ \nabla_{\bf^s} E_S(\bf) = 2 L_S(\bf) \bf^s,
\]
for $s = 1, 2$, then, by a certain reordering of the indices of vertices, the matrix $L_D$, $L_S(\bf)$, and $\bf$ can be written as 
\[
L_D = 
\begin{bmatrix}
    [L_D]_{\I, \I}  &  [L_D]_{\I, \B}\\
    [L_D]_{\B, \I}  &  [L_D]_{\B, \B}
\end{bmatrix}, ~~~ L_S(\bf) = 
\begin{bmatrix}
    [L_S(\bf)]_{\I, \I}  &  [L_S(\bf)]_{\I, \B}\\
    [L_S(\bf)]_{\B, \I}  &  [L_S(\bf)]_{\B, \B}
\end{bmatrix}, ~\mbox{and}~ \bf = 
\begin{bmatrix}
    \bf_\I\\
    \bf_\B
\end{bmatrix},
\]
where $\bf_\I \in \mathbb R^{n_\I \times 2}$ and $\bf_\B \in \mathbb R^{n_\B \times 2}$. Therefore, the gradient of Dirichlet energy and stretch energy with respect to the interior points $\bf_\I$ and boundary points $\bf_\B$ are
\begin{align}
    \nabla_{\bf_\I} E_D(\bf) &= [L_D]_{\I, \I}  \bf_\I + [L_D]_{\I, \B} \bf_\B \in \mathbb R^{n_\I \times 2} \label{eq:dED}\\
    \nabla_{\bf_\B} E_D(\bf) &= [L_D]_{\B, \I}  \bf_\I + [L_D]_{\B, \B} \bf_\B \in \mathbb R^{n_\B \times 2}. \nonumber
\end{align}
and
\begin{align}
    \nabla_{\bf_\I} E_S(\bf) &= 2\big( [L_S(\bf)]_{\I, \I}  \bf_\I + [L_S(\bf)]_{\I, \B} \bf_\B \big) \in \mathbb R^{n_\I \times 2} \label{eq:dEs}\\
    \nabla_{\bf_\B} E_S(\bf) &= 2\big( [L_S(\bf)]_{\B, \I}  \bf_\I + [L_S(\bf)]_{\B, \B} \bf_\B \big) \in \mathbb R^{n_\B \times 2}. \nonumber
\end{align}

Next, to remain boundary points $\bf_\B$ on the unit circle, $\bf_\B \in \mathbb R^{n_\B \times 2}$ are represented by the polar coordinate with radius $1$. i.e.
\[
\bf_\B = 
\begin{bmatrix}
    \bf_\B^1     &   \bf_\B^2
\end{bmatrix} = 
\begin{bmatrix}
    \cos \theta_1   &   \sin \theta_1\\
    \vdots          &   \vdots\\
    \cos \theta_{n_\B}   &   \sin \theta_{n_\B}
\end{bmatrix} =
\begin{bmatrix}
    \cos\btheta     &   \sin \btheta
\end{bmatrix} \equiv
\begin{bmatrix}
    \c  &  \s
\end{bmatrix}
\]
in which $\btheta = (\theta_1, \cdots, \theta_{n_\B})^\top$. Therefore, the variable of disk-shaped simplicial mapping can be represented as a vector
\begin{equation}
\f = 
\begin{bmatrix}
    \bf_\I^1\\
    \bf_\I^2\\
    \btheta
\end{bmatrix} \in \mathbb R^{2 n_\I + n_\B}.
\label{eq:hat f}
\end{equation}

The image area $\A(\bf)$ is a polygon that approximated to the unit disk can be formulated by
\begin{align}
\A(\bf) &= \frac{1}{2} \sum_{i=1}^{n_\B} \sin (\theta_{i+1} - \theta_i) \quad\quad (\theta_{n_\B+1}:=\theta_1) \nonumber\\
&= \frac{1}{2}
\begin{bmatrix}
\cos \theta_1\\
\cos \theta_2\\
\vdots\\
\cos \theta_{n_\B}
\end{bmatrix}^{\top}
\begin{bmatrix}
0  &  1     &       &  -1\\
-1 &  0 &\ddots &    \\
   & \ddots &\ddots &  1 \\
1  &        &-1     &0
\end{bmatrix}
\begin{bmatrix}
\sin \theta_1\\
\sin \theta_2\\
\vdots\\
\sin \theta_{n_\B}
\end{bmatrix}\\ 
&\equiv \frac{1}{2} \c^{\top} D\, \s  \equiv \A(\btheta) \label{eq:PolarArea}
\end{align} 
and its gradient 
\begin{equation}
\nabla \A(\btheta) = - \frac{1}{2} \big( \diag (\c) D \c + \diag (\s) D \s \big).
\end{equation}
Detailed derivations can be found in \cite{KuLY21}.

With the total area formulation \eqref{eq:PolarArea}, we regard $\bf$ as $\bf(\f)$. Thus, the conformal energy $E_C(\bf)$ in \eqref{eq:Ec}, and authalic energy $E_A(\bf)$ in \eqref{eq:Ea}, can be expressed by 
\begin{equation} \label{eq:polar_Ec}
E_C(\f) = \frac{1}{2} \sum_{s = 1, 2} {\bf^s}^\top L_D  \bf^s -  \frac{1}{2} \c^{\top} D\, \s
\end{equation}
and 
\begin{equation} \label{eq:polar_Ea}
E_A(\f) = \frac{2 |\M|}{\c^{\top} D\, \s} E_S(\f) - \frac{1}{2} \c^{\top} D\, \s, ~~~ E_S(\f) = \frac{1}{2} \sum_{s = 1, 2}  {\bf^s}^\top L_S(\f)  \bf^s,
\end{equation}
respectively. Further, with the gradient of $E_D$ and $E_S$ with respect to $\bf_\I$ and $\bf_\B$ in \eqref{eq:dED} and \eqref{eq:dEs}, respectively, the gradient of $E_C$ and $E_A$ \cite{LiYu24} with respect to $\f$ can be formulated by 
\begin{subequations}    \label{eq:dEc}
\begin{align}
    \nabla_{\bf_\I^1} E_C(\f) &= [L_D]_{\I, \I}  \bf_\I^1 + [L_D]_{\I, \B} \c \\
    \nabla_{\bf_\I^2} E_C(\f) &= [L_D]_{\I, \I}  \bf_\I^2 + [L_D]_{\I, \B}  \s \\
    \nabla_{\btheta} E_C(\f) &= \diag (\c) \big( [L_D]_{\B, \I}  \bf_\I^2 + [L_D]_{\B, \B} \s \big) -\diag (\s) \big( [L_D]_{\B, \I}  \bf_\I^1 + [L_D]_{\B, \B} \c \big) \nonumber\\
                    &+\frac{1}{2} (\diag (\c) D \c + \diag(\s) D \s),
\end{align}
\end{subequations}
and
\begin{subequations} \label{eq:dEa}
\begin{align}
    \nabla_{\bf_{\I}^1} E_A(\f)  &= \Big( \frac{ 2 |\M|}{\c^{\top} D \s} \Big) \nabla_{\bf_{\I}^1} E_S(\f) \nonumber\\
        &= \Big( \frac{4 |\M|}{\c^{\top} D \s} \Big) \,( [L_S(\f)]_{\I, \I} \bf_{\I}^2 + [L_S(\f)]_{\I, \B} \c)\\
    \nabla_{\bf_{\I}^2} E_A(\f)  &= \Big( \frac{ 2 |\M|}{\c^{\top} D \s} \Big) \nabla_{\bf_{\I}^2} E_S(\f) \nonumber\\
        &= \Big( \frac{4|\M|}{\c^{\top} D \s} \Big) \,( [L_S(\f)]_{\I, \I} \bf_{\I}^2 + [L_S(\f)]_{\I, \B} \s)\\
    \nabla_{\btheta} E_A(\f) &= \nabla_{\btheta} \Big( \frac{2 |\M|}{\c^{\top} D \s} \Big)  E_S(\f) + \Big( \frac{2 |\M|}{\c^{\top} D \s} \Big) \nabla_{\btheta} E_S(\f) - \nabla_{\btheta} \A(\btheta)  \nonumber\\
    &= \frac{4 |\M|}{\c^{\top} D \s}\, \big( \diag(\c)( [L_S(\f)]_{\B, \I} \bf^2_{\I} + [L_S(\f)]_{\B, \B} \s) - \diag(\s)( [L_S(\f)]_{\B, \I} \bf^1_{\I} + [L_S(\f)]_{\B, \B} \c) \big) \nonumber\\
        &+ \Big(\frac{1}{2} + \frac{|\M| E_S(\f))}{(\c^{\top} D \s)^2} \Big) (\diag (\c) D \c + \diag(\s) D \s).
\end{align}
\end{subequations}

\subsection{Preconditioned Nonlinear CG Method}
\label{sec:3.2}
The minimization \eqref{eq:f update} can be solved efficiently by the preconditioned nonlinear CG method \cite{LiYu24}, which has the theoretically guaranteed convergence. Specifically, the disk-shaped simplicial mapping $f$ can be represented with polar coordinate $\f \in \mathbb R^{2 n_\I + n_\B}$ as in \eqref{eq:hat f}. With fix the parameter $\lambda^{(k)}$ and $\rho^{(k)}$, the objective functional is then denoted by
\begin{equation} \label{eq:PCG_obj}
\L_{(\lambda^{(k)}, \rho^{(k)})}(\f) := \L_A (\bf, \lambda^{(k)}, \rho^{(k)}).
\end{equation}
The gradient of $\L_{(\lambda^{(k)}, \rho^{(k)})}$ with respect to $\f^{(\ell)}$ in $\ell$-th iteration, denoted as $\g^{(\ell)}$, defined by
\begin{equation} \label{eq:grad L}
\nabla \L_{(\lambda^{(k)}, \rho^{(k)})}(\f) = \nabla E_C(\f) + \big( \lambda^{(k)} + \rho^{(k)} (E_A(\f) - E_C(\f)) \big) \nabla \big(E_A(\f) - E_C(\f) \big), 
\end{equation}
where $\nabla E_C$ and $\nabla E_A$ are computed by \eqref{eq:dEc} and \eqref{eq:dEa}.

The initial mapping $\f^{(0)}$ is selected by the outcome mapping of fixed-point method \cite{YuHL20} with 5 iterations, presented in Appendix A. The updating of $\f^{(\ell)}$ in $\ell$-th iteration is by the iterative method with the following procedure,
\[
\f^{(\ell+1)} = \f^{(\ell)} + \alpha_\ell \p^{(\ell)}
\]
produced by the suitable step length $\alpha_\ell$ and the direction 
\[
 \p^{(\ell)} = -M^{-1} \g^{(\ell)} + \beta_\ell \p_\I^{(\ell-1)} ~~\text{with}~~ \beta_\ell = \frac{ {\g^{(\ell)}}^\top M^{-1}  \g^{(\ell)} }{  {\g^{(\ell-1)}}^\top M^{-1}  \g^{(\ell-1)} },
\]
and the preconditioner $M$, where $\p^{(0)} = - M^{-1} \g^{(0)}$ when $\ell = 0$. The preconditioner $M$ has to be the symmetric positive definite matrix by choosing
\begin{equation}
M = 
\begin{bmatrix}
    I_2 \otimes [L_{\lambda^{(k)}}(\f^{(0)})]_{\I, \I} & \\
    &  [L_{\lambda^{(k)}}(\f^{(0)})]_{\B, \B}
\end{bmatrix},
\label{eq:preconditioner}
\end{equation}
where the matrix $L_{\lambda^{(k)}}(f)$ is defined by the linear combination of $L_D$ and $L_S(\f)$,
\begin{equation}
L_{\lambda^{(k)}} (\f) = (1 - \lambda^{(k)}) L_D + \frac{2 |\M| \lambda^{(k)}}{\A(\btheta)} L_S(\f).
\label{eq:Lrho}
\end{equation}
It is worth noting that $L_{\lambda^{(k)}} (\f^{(0)})$ is a Laplacian matrix if both coefficients of $L_D$ and $L_S(\f^{0})$ are positive, which is equivalent to $\lambda^{(k)} \in [0, 1]$. This can be achieved with proper updating of $\lambda^{(k)}$ because the Lagrange multiplier $\lambda^*$ is in $[0, 1]$ by the following lemma.
\begin{lemma} \label{lma:lambda}
    The Lagrange multiplier $\lambda^* = \max_\lambda \min_\f \L(\f, \lambda)$ is in $[0, 1]$.
\end{lemma}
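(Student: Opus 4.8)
The plan is to study the dual function $g(\lambda):=\min_{\f}\L(\f,\lambda)$ and show that it is a concave function of $\lambda$ whose maximizer must lie in $[0,1]$. The key observation is that, upon rewriting, $\L(\f,\lambda)=(1-\lambda)\,E_C(\f)+\lambda\,E_A(\f)$, which for each fixed $\f$ is affine in $\lambda$; hence $g$, being a pointwise infimum of affine functions of $\lambda$, is concave. It therefore suffices to pin down the qualitative shape of $g$, in particular its values at $\lambda=0$ and $\lambda=1$ and its sign outside $[0,1]$.

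First I would evaluate $g$ at the two endpoints. Since $\L(\f,0)=E_C(\f)$ and, by \eqref{eq:Ec}, $E_C\ge 0$ with $\inf_{\f}E_C(\f)=0$ (attained by a disk conformal parameterization $\f_C$), we get $g(0)=0$; likewise $\L(\f,1)=E_A(\f)$ and, by \eqref{eq:Ea}, $E_A\ge 0$ with $\inf_{\f}E_A(\f)=0$ (attained by a disk authalic parameterization $\f_A$), so $g(1)=0$. Moreover, for $\lambda\in[0,1]$ the coefficients $1-\lambda$ and $\lambda$ are nonnegative, so $\L(\f,\lambda)\ge 0$ for every $\f$, whence $g(\lambda)\ge 0$ on $[0,1]$ (which is also forced by concavity together with $g(0)=g(1)=0$).

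Next I would bound $g$ from above by $0$ outside $[0,1]$. For $\lambda>1$, testing with $\f_A$ gives $g(\lambda)\le(1-\lambda)E_C(\f_A)+\lambda E_A(\f_A)=(1-\lambda)E_C(\f_A)\le 0$, because $1-\lambda<0$ and $E_C(\f_A)\ge 0$. For $\lambda<0$, testing with $\f_C$ gives $g(\lambda)\le(1-\lambda)E_C(\f_C)+\lambda E_A(\f_C)=\lambda E_A(\f_C)\le 0$, because $\lambda<0$ and $E_A(\f_C)\ge 0$. (If the infima above are not literally attained, the same estimates hold with $\f_A,\f_C$ replaced by minimizing sequences, using only $E_C,E_A\ge 0$.) Combining the two steps: $g$ is concave, equals $0$ at $\lambda=0$ and $\lambda=1$, is $\ge 0$ on $[0,1]$, and is $\le 0$ off $[0,1]$; since a concave function is upper semicontinuous and $g$ is real-valued on the compact interval $[0,1]$, the value $\max_{\lambda\in[0,1]}g$ is attained at some $\lambda^*\in[0,1]$, and $g(\lambda^*)\ge 0\ge g(\lambda)$ for every $\lambda\notin[0,1]$. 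Hence $\lambda^*=\max_\lambda\min_{\f}\L(\f,\lambda)$ can be taken in $[0,1]$, which is the assertion.

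The computations here are all one-line, so the points requiring care are: (i) the existence of a disk conformal and a disk authalic parameterization in the admissible class of disk-shaped simplicial maps used as test maps --- here I would invoke the constructions underlying $E_C$ and $E_A$, or fall back on the minimizing-sequence version above; and (ii) the degenerate case in which $\M$ is isometric to a planar region, where $E_C(\f_A)=0=E_A(\f_C)$ and $g$ may vanish identically on $[0,1]$, so that while the maximum is still attained in $[0,1]$ it need not be unique there; excluding this case ($E_C(\f_A)>0$ and $E_A(\f_C)>0$) makes the inequalities off $[0,1]$ strict and pins $\lambda^*$ uniquely to $[0,1]$. One should also note that $g(\lambda)$ may equal $-\infty$ for $\lambda<0$, but this only reinforces the conclusion.
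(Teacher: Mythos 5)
Your proposal is correct and rests on the same key computation as the paper's proof: testing the dual function with a conformal map $\f_C$ (so that $\L(\f_C,\lambda)=\lambda E_A(\f_C)\le 0$ for $\lambda<0$) and with an authalic map $\f_A$ (so that $\L(\f_A,\lambda)=(1-\lambda)E_C(\f_A)\le 0$ for $\lambda>1$); the paper packages this as a contradiction with $\min_\f\L(\f,0)=0$ and $\min_\f\L(\f,1)=0$ rather than via concavity of the dual function, but the substance is identical. Your remark on the degenerate case is a minor genuine improvement, since the paper's strict inequalities $\lambda^*E_A(\f_C)<0$ and $(1-\lambda^*)E_C(\f_A)<0$ tacitly assume $E_A(\f_C)>0$ and $E_C(\f_A)>0$.
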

\begin{proof}
    Suppose $\lambda^* < 0$. For a conformal mapping $\f_C$, since $E_A(\f_C) \geq 0$ and $E_C(\f_C) = 0$, we have
    \begin{align*}
    \min_\f \L(\f, \lambda^*) \leq \L(\f_C, \lambda^*) &= (1-\lambda^*) E_C(\f_C) + \lambda^* E_A(\f_C) \\
    &= \lambda^* E_A(\f_C) < 0 = E_C(\f_C) = \min_\f \L(\f, 0),
    \end{align*}
    which is contradict to $\lambda^* = \max_\lambda \min_\f \L(\f, \lambda)$.
    On the other hand, suppose $\lambda^* > 1$, for a authalic mapping $\f_A$, since $E_C(\f_A) \geq 0$ amd $E_A(\f_A) = 0$, we have
    \begin{align*}
    \min_\f \L(\f, \lambda^*) \leq \L(\f_A, \lambda^*) &= (1-\lambda^*) E_C(\f_A) + \lambda^* E_A(\f_A) \\
    &= (1-\lambda^*) E_C(\f_A) < 0 = E_A(\f_A) = \min_\f \L(\f, 1),
    \end{align*}
    which is contradict to $\lambda^* = \max_\lambda \min_\f \L(\f, \lambda)$.
    As a result, we conclude $\lambda^* \in [0, 1]$.
\end{proof}

An ideal step length $\alpha^*$ satisfy $\alpha^* = \argmin_\alpha \L_{(\lambda^{(k)}, \rho^{(k)})}(\f^{(\ell)} + \alpha \p^{(\ell)})$. However, the complexity of $\L_A$ makes it hard to compute $\alpha^*$ directly. Thus, we form an interpolating quadratic polynomial with available information on energy and gradient and approximate $\alpha^*$ by the minimizer of the polynomial. Specifically, $\alpha$ satisfies
\[
\alpha = \argmin \varphi(\alpha), ~~~ \varphi ( \alpha) = a \alpha^2 + b \alpha + c,
\]
where $\varphi(\alpha)$ is the quadratic function interpolating $\L_{(\lambda^{(k)}, \rho^{(k)})} ({\f}^{(\ell)} + \alpha \p^{(\ell)})$ by the information
\begin{subequations} \label{eq:Step}
\begin{equation} \label{eq:Phi_cond}
\begin{cases}
\varphi(0) =  \L_{(\lambda^{(k)}, \rho^{(k)})} ({\f}^{(\ell)}), \\
\tfrac{\d}{\d\alpha} \varphi(0) = \tfrac{\d}{\d\alpha} \L_{(\lambda^{(k)}, \rho^{(k)})} ({\f}^{(\ell)} + \alpha \p^{(\ell)})\Big|_{\alpha = 0} 
= {{\p}^{(\ell)}}^\top \nabla_{\f} \L_{(\lambda^{(k)}, \rho^{(k)})} ({\f}^{(\ell)}), \\
\varphi(\alpha_{\ell-1}) = \L_{(\lambda^{(k)}, \rho^{(k)})} ({\f}^{(\ell)} + \alpha^{(\ell-1)} \p^{(\ell)})
\end{cases}
\end{equation}
Therefore, we select $\alpha_\ell$ with the approximated step length satisfying $\tfrac{\d}{\d\alpha} \varphi(\alpha) = 0$, which can be explicitly formulated as
\begin{equation}
\alpha_\ell = - \frac{b}{2a},
\end{equation}
where the coefficient $a$, and $b$, solved by \eqref{eq:Phi_cond}, are
\begin{equation}
\begin{cases}
    b = \tfrac{\d}{\d\alpha} \varphi(0),\\
    a = \dfrac{\varphi(\alpha_{\ell-1}) - \varphi(0) - \alpha_{\ell-1} \tfrac{\d}{\d\alpha} \varphi(0) }{\alpha_{\ell-1}^2}.
\end{cases}
\end{equation}
\end{subequations}

Numerically, if the quadratic interpolation step length does not result in a sufficient decrease in energy, the obtained step length can be used as an initial guess. We then perform quadratic interpolation again, repeating the process until the energy decreases adequately.

In practice, for each block $M_i$ of the preconditioner $M$, $i=1,2$, we compute the preordered Cholesky decomposition
\begin{equation} \label{eq:Chol}
U_i^\top U_i = P_i^\top M_i P_i,
\end{equation}
where $U_i$ is an upper triangular matrix and $P_i$ is the approximate minimum degree permutation matrix \cite{AmDD04}. Then, linear systems of the form $M_i\x = \r$ can be efficiently solved by solving lower and upper triangular systems 
\begin{subequations} \label{eq:LS}
\begin{equation} \label{eq:LS1}
U_i^\top \y = P_i^\top \r \,\text{ and }\, U_i\z = \y.
\end{equation}
Ultimately, the solution to the system is given by 
\begin{equation} \label{eq:LS2}
\x = P_i\z. 
\end{equation}
\end{subequations}

As a result, the procedure of minimization \eqref{eq:f update} can be summarized by the Algorithm \ref{alg:PCG}.

\begin{algorithm}[htbp]
\caption{Preconditioned nonlinear CG method for solving \eqref{eq:f update}}
\label{alg:PCG}
\begin{algorithmic}[1]
\Require An initial map $\bf$, multiplier $\lambda$, and penalty parameter $\rho$.
\Ensure A minimizer mapping $\bf^*$ in \eqref{eq:f update}.
\State Let $\I$ and $\B$ defined in \eqref{eq:BI}.
\State Construct $L_D$ and $L_S$ by \eqref{eq:Ld} and \eqref{eq:Ls}. 
\State Set $\btheta = \text{atan2} (\bf_\B^2, \bf_\B^1)$.
\State Construct the polar form $\f$ of $\bf$ as defined by \eqref{eq:hat f}.
\State Compute gradient $\g$ \eqref{eq:grad L} with respect to $\f$ by \eqref{eq:dEc} and \eqref{eq:dEa}.
\State Compute the preconditioner $M$ by \eqref{eq:preconditioner} and \eqref{eq:Lrho}.
\State Compute the Cholesky decomposition $U_i^\top U_i = P_i^\top M_i P_i$, as in \eqref{eq:Chol}, $i=1,2$.
\State Solve $M\h = \g$ by \eqref{eq:LS}.
\State Let $\p = -\h$.
\State Set $\alpha = 0.1$.
\While {not converge}
    \State Update $\alpha$ as \eqref{eq:Step}.
    \State Update $\f \leftarrow \f + \alpha \p$.
    \State Update $L_S$ by \eqref{eq:Ls}.
    \State Let $\gamma = \h^\top \g$.
    \State Update the gradient $\g$ \eqref{eq:grad L} by \eqref{eq:dEc} and \eqref{eq:dEa}.
    \State Solve $M\h = \g$ by \eqref{eq:LS}.
    \State Update $\beta = (\h^{\top} \g) / \gamma$.
    \State Update $\p \gets -\h + \beta \p$.
\EndWhile
\State Obtain $\bf^*$ with the polar form $\f$.
\end{algorithmic}
\end{algorithm}

\subsection{Algorithm of Augmented Lagrangian Method}
\label{sec:3.3}
The updating of $\lambda^{(k)}$, given by \eqref{eq:lambda update}, can be viewed as gradient ascent method with step length $\rho^{(k)}$. Accompanied by amplifying of $\rho^{(k)}$ in \eqref{eq:rho update} with the factor $\tau > 1$, $\lambda^{(k)}$ has the risk of bad behavior. To prevent that, it is essential to set $\rho^{(0)}$ small initially and tightly control the residual $r(\f^{(k)}) = E_A(\f^{(k)}) - E_C(\f^{(k)})$ to well estimate $\lambda^{(k+1)}$ for convergence \cite{CoGT91}. In other words, after updating $\f^{(k)}$ in \eqref{eq:f update} with the criteria
\begin{equation} \label{eq:omega}
\| \nabla_{\f} \L_{(\lambda^{(k)}, \rho^{(k)})} ({\f}^{(\ell)}) \| \leq \omega^{(k)} ~~ \mbox{for some} ~ \omega^{(k)},
\end{equation} 
we update $\lambda^{(k)}$ only if $|r(\f^{(k)})| \leq \eta^{(k)}$ for some criteria $\eta^{(k)}$.
Otherwise, we increase $\rho^{(k+1)} = \tau \rho^{(k)}$ and remain $\lambda^{(k+1)} = \lambda^{(k)}$.

Specifically, the criteria \cite{CoGT91} is set as follow, if $|r(\f^{(k)})| \leq \eta^{(k)}$, we set 
\begin{equation} \label{eq:lambda up}
\begin{cases}
\lambda^{(k+1)} &\leftarrow \lambda^{(k)} + \rho^{(k)}\, r( \f^{(k)}), \\
\rho^{(k+1)}    &\leftarrow \rho^{(k)},\\
 u^{(k+1)}      &\leftarrow \min( 1/\rho^{(k+1)}, \gamma), \\
\omega^{(k+1)}  &\leftarrow \omega^{(k)} (u^{(k+1)})^{t_\omega},\\
\eta^{(k+1)}    &\leftarrow \eta^{(k)} (u^{(k+1)})^{t_\eta}.,
\end{cases}
\end{equation}  
Otherwise, we set
\begin{equation} \label{eq:rho up}
\begin{cases}
\lambda^{(k+1)} &\leftarrow \lambda^{(k)}, \\
\rho^{(k+1)}    &\leftarrow \tau \rho^{(k)},\\
 u^{(k+1)}      &\leftarrow \min( 1/\rho^{(k+1)}, \gamma), \\
\omega^{(k+1)}  &\leftarrow \omega^{(0)} (u^{(k+1)})^{v_\omega},\\
\eta^{(k+1)}    &\leftarrow \eta^{(0)} (u^{(k+1)})^{v_\eta}.
\end{cases}
\end{equation}  
Numerically, we suggest parameters are set by
$\tau = 5$, $t_\omega = v_\omega = 1$, $t_\eta = 0.9$, $v_\eta = 0.5$, $\eta^{(0)} = 0.01$, and $\gamma = \omega^{(0)} = 0.1$.

Moreover, to remain $\lambda^{(k)} \in [0, 1]$ for ensuring the preconditioner $M$ being positive definite in \eqref{eq:preconditioner}, we require tighter criteria that 
\begin{equation} \label{eq:tight lambda}
|r(\f^{(k)})| \leq \min \Big( \eta^{(k)},~ \tfrac{1-\lambda^{(k)} }{\rho^{(k)}},~ \tfrac{\lambda^{(k)} }{\rho^{(k)}} \Big)
\end{equation}  
for the procedure \eqref{eq:lambda up}.

Hence, we summarize the procedure of augmented Lagrangian method \eqref{eq:pseudo_alg} in Algorithm \ref{alg:AugLag}.

\begin{algorithm}[]
\caption{Augmented Lagrangian method for distortion-balancing parameterization}
\label{alg:AugLag}
\begin{algorithmic}[1]
\Require A simply connected open mesh $\mathcal{M}$. 
\Ensure A disk-shaped distortion-balancing map $\bf^*$ in matrix form \eqref{eq:matrix_f}.
\State Let $\I$ and $\B$ be defined in \eqref{eq:BI}.
\State Compute initial mapping $\bf$ by Algorithm \ref{alg:BEM} with $\lambda = 0.4$.
\State Define $L_D$ and $L_S$ as in \eqref{eq:Ld} and \eqref{eq:Ls}.
\State Set $\omega^* = \sqrt{(\# \I + \# \B)} 10^{-4}$.
\State Set $\eta^* = 10^{-5}$.
\State Set $\tau = 5$.
\State Set $\rho = 0.1$.
\State Set $\omega = 0.01$.
\State Set $\eta = 0.01$.
\State Compute polar form $\f$ of $\bf$ as in \eqref{eq:hat f}.
\For{$k = 0, 1, 2, \cdots$}
    \State Find $\f$ satisfying \eqref{eq:omega} by Algorithm \ref{alg:PCG}.
    \State Compute $r = E_C - E_S$ by \eqref{eq:polar_Ec} and \eqref{eq:polar_Ea}.
    \If{ $\f$ satisfying \eqref{eq:omega} with $\omega^*$ and $|r| < \eta^*$}
        \State Stop
    \EndIf
    \If{$|r| \leq \min \big( \eta,~(1-\lambda)/\rho,~\lambda/\rho\big)$}
        \State Set $\lambda = \lambda + \rho \,r$.
        \State Set $u = \min(1/ \rho, ~0.1)$.
        \State Set $\omega = \omega \, u$.
        \State Set $\eta = \eta \, u^{0.9}$.
    \Else
        \State Set $\rho = \tau \rho$.
        \State Set $u = \min(1/ \rho, ~0.1)$.
        \State Set $\omega = 0.1 \, u$.
        \State Set $\eta = 0.01 \, u^{0.5}$.
    \EndIf
    \State $k = k + 1$.
\EndFor
\State Obtain $\bf^*$ with the polar form $\f$.
\end{algorithmic}
\end{algorithm}

\subsection{Modification for Square-shaped}
\label{sec:3.4}
From an application perspective, it is valuable to consider square-shaped parameterization. We use the approach that follows the method in \cite{Yueh23}, where the boundary is constrained by fixing four corner points and allowing the remaining boundary points to slide along the square's edges. The square-shaped parameterization is applied to geometry images in Sect. \ref{sec:6}.

Specifically, we first sort the boundary indices $\B = \{\B(1), \B(2), \cdots, \B(n_\B) \}$ in counterclockwise order. Next, we select the corner indices $\C = \{\C_1, \C_2, \C_3, \C_4 \}$ from $\B$, and maps the corresponding boundary vertices $\v_{\B(\C_1)}$, $\v_{\B(\C_2)}$, $\v_{\B(\C_3)}$, and $\v_{\B(\C_4)}$ to the square’s corners at $(0, 0)$, $(1, 0)$, $(1, 1)$, and $(0, 1)$, respectively. Since the boundary indices $\B$ form a periodic sequence, we set $\C_1 = 1$ and preserve the counterclockwise order. The boundary indices $\B$ are then divided into four segments,
\begin{align*}
    \Y_0 &= \{ i \in \B \,|\, ~ 1  \leq i \leq \C_2 \} \\
    \X_1 &= \{ i \in \B \,|\, \C_2 \leq i \leq \C_3 \} \\
    \Y_1 &= \{ i \in \B \,|\, \C_3 \leq i \leq \C_4 \} \\
    \X_0 &= \{ i \in \B \,|\, \C_4 \leq i \leq n_{\B}\} \cup \{1\}.
\end{align*}
The square-shaped boundary constraint is then formulated as follows,
\begin{equation*}
    \bf^2_{\B(\Y_0)} = \0, ~~ \bf^1_{\B(\X_1)} = \1, ~~ \bf^2_{\B(\Y_1)} = \1, ~~\mbox{and}~~ \bf^1_{\B(\X_0)} = \0.
\end{equation*}
The remaining indices of $\bf^1$ and $\bf^2$ correspond to the interior points, which are defined by
\[
\I_1 = \I \cup \B(\Y_0) \cup \B(\Y_1), ~~\mbox{and}~~ \I_2 = \I \cup \B(\X_0) \cup \B(\X_1),
\] 
respectively. As a result, the square-shaped distortion-balancing mapping can be computed using Algorithm \ref{alg:AugLag} with variables $\bf^1_{\I_1}$ and $\bf^2_{\I_2}$ without polar coordinate.

\section{Convergence Analysis}
\label{sec:4}
In this section, we establish the global convergence of Algorithm \ref{alg:AugLag} for solving the objective function \eqref{eq:obj_fun} to obtain the disk-shaped distortion-balancing parameterization, with inner iterations performed using Algorithm \ref{alg:PCG}.

For the inner iterations \eqref{eq:f update}, utilizing the preconditioned nonlinear CG method, Algorithm \ref{alg:PCG}, convergence is theoretically guaranteed, as stated in the following theorem.

\begin{theorem}
Given $\lambda^{(k)}$ and $\rho^{(k)}$, the preconditioned nonlinear CG method, Algorithm \ref{alg:PCG}, converges globally under the assumption that step lengths satisfy the strong Wolfe conditions,
\begin{subequations} \label{eq:Wolfe}
\begin{align}
    &\L_{(\lambda^{(k)}, \rho^{(k)})} ({\f}^{(\ell)} + \alpha_\ell \p^{(\ell)})  \leq \L_{(\lambda^{(k)}, \rho^{(k)})}({\f}^{(\ell)}) + c_1 \alpha_\ell \nabla \L_{(\lambda^{(k)}, \rho^{(k)})}({\f}^{(\ell)})^{\top} \p^{(\ell)}, \label{eq:Wolfe1} \\
    & \big|\nabla \L_{(\lambda^{(k)}, \rho^{(k)})}({\f}^{(\ell)} + \alpha_\ell \p^{(\ell)})^{\top} \p^{(\ell)} \big|  \leq c_2  \big| \nabla \L_{(\lambda^{(k)}, \rho^{(k)})}({\f}^{(\ell)})^{\top} \p^{(\ell)} \big|, \label{eq:Wolfe2}
\end{align}
\end{subequations}
with $0 < c_1 < c_2 < \frac{1}{2}$.    
\end{theorem}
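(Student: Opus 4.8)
The plan is to invoke the classical global convergence theorem for nonlinear conjugate gradient methods with the Fletcher–Reeves-type update (Al-Baali's theorem, as found in Nocedal–Wright), suitably adapted to the preconditioned setting. The key observation is that preconditioning by the fixed symmetric positive definite matrix $M$ in \eqref{eq:preconditioner} is equivalent to running the standard (unpreconditioned) Fletcher–Reeves CG method on the change of variables $\tilde{\f} = M^{1/2} \f$; since $M$ is constant across the inner iteration, this equivalence is exact, and the parameter $\beta_\ell = ({\g^{(\ell)}}^\top M^{-1}\g^{(\ell)})/({\g^{(\ell-1)}}^\top M^{-1}\g^{(\ell-1)})$ is precisely the Fletcher–Reeves parameter in the transformed coordinates. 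So it suffices to verify the hypotheses of the standard convergence theorem for the transformed objective $\tilde{\L}(\tilde{\f}) := \L_{(\lambda^{(k)},\rho^{(k)})}(M^{-1/2}\tilde{\f})$.

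First I would check that $\tilde{\L}$ (equivalently $\L_{(\lambda^{(k)},\rho^{(k)})}$) is bounded below and continuously differentiable with Lipschitz-continuous gradient on the level set $\mathcal{N} = \{\f : \L_{(\lambda^{(k)},\rho^{(k)})}(\f) \le \L_{(\lambda^{(k)},\rho^{(k)})}(\f^{(0)})\}$, and that this level set is bounded. Boundedness below of $\L_A$ follows from $E_C \ge 0$, $E_A \ge 0$, and the quadratic penalty term being nonnegative, once one notes that the cross term $\lambda^{(k)}(E_A - E_C)$ is dominated by the penalty $\tfrac{\rho^{(k)}}{2}(E_A-E_C)^2$ up to an additive constant. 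The smoothness and boundedness of the level set rely on the structure of $E_C$ and $E_A$ as described in Sect. \ref{sec:2.1}: the disk constraint confines the boundary angles to a compact torus, and on the region where no triangle degenerates the stretch Laplacian $L_S(\f)$ and the area $\A(\btheta)$ are smooth functions of $\f$. Here I would need to argue that the iterates stay in a region bounded away from degenerate configurations — this is where the penalty term helps, since $E_A$ blows up as any image triangle collapses, so sublevel sets of $\L_A$ exclude a neighborhood of the degenerate locus.

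With these regularity properties in hand, the strong Wolfe conditions \eqref{eq:Wolfe1}–\eqref{eq:Wolfe2} with $0 < c_1 < c_2 < \tfrac12$ guarantee (by Al-Baali's lemma) that the Fletcher–Reeves directions $\p^{(\ell)}$ are descent directions and that the Zoutendijk condition $\sum_\ell \big(({\g^{(\ell)}}^\top M^{-1}\g^{(\ell)})^2 / \|\p^{(\ell)}\|_M^2\big) < \infty$ holds, from which $\liminf_{\ell\to\infty} \|\g^{(\ell)}\|_{M^{-1}} = 0$, hence $\liminf_{\ell\to\infty}\|\nabla\L_{(\lambda^{(k)},\rho^{(k)})}(\f^{(\ell)})\| = 0$. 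That is exactly global convergence in the sense meant here. I would present the argument by stating the transformed-variable reduction explicitly, then citing the standard theorem (Nocedal–Wright, or Al-Baali 1985) for the transformed problem, and finally translating back.

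The main obstacle I anticipate is the degeneracy issue: one must ensure that along the inner iteration the simplicial map never drives an image triangle to zero (or negative) area, since otherwise $L_S(\f)$, $\A(\btheta)$, and hence $\nabla\L_A$ fail to be well-defined or Lipschitz, and the level-set boundedness argument collapses. Addressing this requires showing that $\L_A$ is coercive toward the degenerate boundary of the feasible region — plausible from the $|\M|/\A(\bf)$ factor in \eqref{eq:Ea} and the blow-up of $E_S$ as faces collapse, but it needs a careful statement. The remaining steps (the change of variables, verifying Lipschitz gradient on the compact non-degenerate region, invoking Zoutendijk) are routine once that is settled.
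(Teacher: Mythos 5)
Your overall route is the same as the paper's: reduce the preconditioned iteration to the classical Fletcher--Reeves scheme and invoke the standard global convergence theorem under the strong Wolfe conditions with $0<c_1<c_2<\tfrac12$ (the paper does this by citing the preconditioned analogue in \cite[Appendix A]{LiYu24} rather than by the explicit change of variables $\tilde\f=M^{1/2}\f$, but these are the same argument). The verification of boundedness below via $E_C\ge 0$, $E_A\ge 0$, and the nonnegativity of the penalty term also matches the paper.

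There is, however, one concrete gap: you take the symmetric positive definiteness of $M$ as given, and your entire reduction hinges on it. In this algorithm $M$ is built from $L_{\lambda^{(k)}}(\f^{(0)})=(1-\lambda^{(k)})L_D+\frac{2|\M|\lambda^{(k)}}{\A(\btheta)}L_S(\f^{(0)})$ as in \eqref{eq:preconditioner}--\eqref{eq:Lrho}, and this is a Laplacian (hence its interior and boundary blocks are positive definite) only when both coefficients are nonnegative, i.e.\ only when $\lambda^{(k)}\in[0,1]$. Since $\lambda^{(k)}$ is updated by the first-order multiplier estimate, nothing a priori keeps it in $[0,1]$; the paper secures this by combining Lemma~\ref{lma:lambda} (the optimal multiplier $\lambda^*$ lies in $[0,1]$) with the deliberately tightened acceptance criterion \eqref{eq:tight lambda}, which caps the multiplier step $\rho^{(k)}|r(\f^{(k)})|$ by both $1-\lambda^{(k)}$ and $\lambda^{(k)}$. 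Without that safeguard, $M$ can be indefinite, the directions $\p^{(\ell)}=-M^{-1}\g^{(\ell)}+\beta_\ell\p^{(\ell-1)}$ need not be descent directions, and Al-Baali's lemma does not apply. Your closing concern about degenerate image triangles (where $L_S(\f)$ and $\A(\btheta)$ lose smoothness and the Lipschitz/level-set hypotheses could fail) is legitimate and is in fact treated more casually by the paper, which simply asserts smoothness of $E_C$ and $E_A$; flagging it is a point in your favor, but the $M\succ0$ issue is the step your proposal genuinely omits.
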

\begin{proof}
If the preconditioner $M$ is a symmetric positive definite matrix, the convergence of the preconditioned nonlinear CG method is guaranteed under the assumptions of Lipschitz continuity of both the gradient and energy, a lower bound on the energy, and that the step length satisfies the strong Wolfe conditions, as shown in \cite[Appendix A]{LiYu24}. Since $\L_{(\lambda^{(k)}, \rho^{(k)})}$ is a quadratic functional of the smooth energy functional $E_C(\f)$ and $E_A(\f)$  with the coefficient $\lambda^{(k)}$ and $\rho^{(k)}$, it remains a smooth energy functional. In addition, $E_C(\f) \geq 0$ and $E_A(\f) \geq 0$ are bounded below.

Furthermore, Lemma \ref{lma:lambda} shows that $\lambda^* \in [0, 1]$ and the tighter criteria for updating $\lambda^{(k)}$ in \eqref{eq:lambda up} and \eqref{eq:tight lambda} ensures $\lambda^{(k)} \in [0,1]$ during iterations. Therefore, the preconditioner $M$ in \eqref{eq:preconditioner} is a symmetric positive definite matrix. As a result, convergence is guaranteed as long as the step length satisfies the strong Wolfe conditions.
\end{proof}

Moreover, the global convergence of the augmented Lagrangian method, Algorithm \ref{alg:AugLag}, is also theoretically guaranteed, as demonstrated below:
\begin{theorem}
    Suppose the iterative sequence $\{ \bf^{(k)} \} \in \Omega$, which
    \[
    \Omega = \Big\{ \bf ~ \big|~ \bf_i \in \mathbb D ~\mbox{for}~ i = 1, \cdots, n \Big\},
    \]
    is generated by Algorithm \ref{alg:AugLag}, whose limit is $\bf^*$. Then, there exist positive numbers $a_1$, $a_2$, $s_1$ and integer $k_0$, such that for all $k \geq k_0$,
    \begin{align*}
        |\lambda^{(k+1)} - \lambda^*| &\leq a_1 \omega^{(k)} + a_2 \|\bf^{(k)} - \bf^*\|_F \\
        \big| E_A(\bf) - E_C(\bf) \big| &\leq s_1 \Big( a_1 \omega^{(k)} \mu^{(k)} + \mu^{(k)} |\lambda^{(k)} - \lambda^*| + a_2 \mu^{(k)} \|\bf^{(k)} - \bf^*\|_F \Big),
    \end{align*}
    where $\mu^{(k)} = \frac{1}{\rho^{(k)}}$, $\omega^{(k)}$ is in \eqref{eq:omega}-\eqref{eq:rho up}, and $\| \cdot \|_F$ is Frobenius norm.
    Moreover, the limit $\bf^{(k)} \rightarrow \bf^*$ is the Karush-Kuhn-Tucker point, $\lambda^{(k)} \rightarrow \lambda^*$ is the Lagrange multiplier, and the gradient of augmented Lagrangian $\nabla_{\bf} \L_A(\bf^{(k)}, \lambda^{(k)}, \rho^{(k)})$ converges to the original Lagrangian $\nabla_{\bf} \L(\bf^*, \lambda^*)$.
\end{theorem}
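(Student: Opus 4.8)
The plan is to invoke the standard convergence theory for the augmented Lagrangian method developed by Conn, Gould, and Toint \cite{CoGT91}, and verify that the specific hypotheses required by that theory are met by our setup. The key structural facts we can lean on are: (i) the feasible set $\Omega$ is compact, so the sequence $\{\bf^{(k)}\}$ has a convergent subsequence and, by the argument of the theorem, a genuine limit $\bf^*$; (ii) by Theorem 2, each inner solve terminates with $\|\nabla_\f \L_{(\lambda^{(k)},\rho^{(k)})}(\f^{(\ell)})\|\le\omega^{(k)}$ achievable, so the stopping rule \eqref{eq:omega} is well posed; (iii) the energies $E_C$ and $E_A$ and their gradients are smooth on $\Omega$ (as already noted in the proof of Theorem 2), hence Lipschitz on the compact $\Omega$; and (iv) Lemma \ref{lma:lambda} together with the tightened update rule \eqref{eq:tight lambda} keeps $\lambda^{(k)}\in[0,1]$, so the iterates stay in a bounded region and the constraint-gradient $\nabla(E_A-E_C)$ is bounded. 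These are exactly the ingredients (smoothness, boundedness below, compactness of the constraint set, and a nonsingularity/regularity condition at the limit) under which \cite{CoGT91} proves the first-order multiplier estimate is accurate to the stated order.

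\medskip

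\emph{First} I would fix notation: write $r(\bf)=E_A(\bf)-E_C(\bf)$ for the constraint residual, $\mu^{(k)}=1/\rho^{(k)}$, and let $g(\bf)=\nabla_\f r(\bf)$ be the constraint gradient. \emph{Second}, I would establish the first inequality. By construction of the inner stopping criterion, at the accepted point $\f^{(k)}$ we have $\|\nabla_\f E_C(\f^{(k)}) + (\lambda^{(k)}+\rho^{(k)}r(\f^{(k)}))\,g(\f^{(k)})\|\le\omega^{(k)}$; writing $\lambda^{(k+1)}=\lambda^{(k)}+\rho^{(k)}r(\f^{(k)})$ from \eqref{eq:lambda up}, this reads $\|\nabla_\f E_C(\f^{(k)})+\lambda^{(k+1)}g(\f^{(k)})\|\le\omega^{(k)}$. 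At the limit, the KKT condition gives $\nabla_\f E_C(\bf^*)+\lambda^* g(\bf^*)=0$. Subtracting and using a Lipschitz bound on $\nabla E_C$ and on $g$ over $\Omega$ (constants absorbed into $a_1,a_2$), together with a uniform lower bound on $\|g(\bf^*)\|$ coming from the regularity of the active constraint at $\bf^*$ (this is where a nonsingularity hypothesis enters, and it is what lets us ``divide through'' by $g$), yields $|\lambda^{(k+1)}-\lambda^*|\le a_1\omega^{(k)}+a_2\|\bf^{(k)}-\bf^*\|_F$ for $k\ge k_0$. \emph{Third}, for the residual bound I would simply rearrange $\lambda^{(k+1)}=\lambda^{(k)}+\rho^{(k)}r(\f^{(k)})$ into $r(\f^{(k)})=\mu^{(k)}(\lambda^{(k+1)}-\lambda^{(k)})$, apply the triangle inequality $|\lambda^{(k+1)}-\lambda^{(k)}|\le|\lambda^{(k+1)}-\lambda^*|+|\lambda^{(k)}-\lambda^*|$, insert the first inequality on the $|\lambda^{(k+1)}-\lambda^*|$ term, and collect terms with a single constant $s_1$ (with the mismatch between $\omega^{(k)}$ at successive indices handled by the monotone update rules \eqref{eq:lambda up}--\eqref{eq:rho up}, which make $\omega^{(k+1)}\le\omega^{(k)}$).

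\medskip

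\emph{Finally}, for the concluding sentence I would argue that any limit point of $\{\bf^{(k)}\}$ is a KKT point of \eqref{eq:obj_fun}: passing to the limit in the inner stopping inequality using $\omega^{(k)}\to0$ gives stationarity of the Lagrangian, and the driving of the residual $r(\f^{(k)})\to0$ (forced either by the $\eta^{(k)}\to0$ branch when multipliers are updated, or by $\rho^{(k)}\to\infty$ otherwise, as in the quadratic-penalty argument already cited in Sect.~\ref{sec:3}) gives primal feasibility $E_A(\bf^*)=E_C(\bf^*)$; together with $\lambda^*\in[0,1]$ from Lemma \ref{lma:lambda} these are precisely the KKT conditions. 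The convergence $\lambda^{(k)}\to\lambda^*$ is then immediate from the first inequality as $\omega^{(k)}\to0$ and $\bf^{(k)}\to\bf^*$, and convergence of $\nabla_\bf\L_A(\bf^{(k)},\lambda^{(k)},\rho^{(k)})$ to $\nabla_\bf\L(\bf^*,\lambda^*)$ follows because the extra penalty term $\rho^{(k)}r(\f^{(k)})g(\f^{(k)}) = (\lambda^{(k+1)}-\lambda^{(k)})g(\f^{(k)})\to0$ while the remaining terms converge by continuity. \textbf{The main obstacle} I anticipate is the regularity/nonsingularity condition needed to produce the uniform lower bound on $\|g(\bf^*)\|$ (equivalently, to guarantee the limit $\bf^*$ is a regular point so that a unique $\lambda^*$ exists and the multiplier estimate is well conditioned); the paper must either assume this, or exploit the specific structure of $E_A-E_C$ — e.g. that at a non-conformal, non-authalic limit the gradient of the single scalar constraint cannot vanish — to discharge it. Handling that cleanly, rather than the routine Lipschitz bookkeeping, is the crux of the argument.
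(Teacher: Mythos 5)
Your proposal is correct and takes essentially the same route as the paper: the published proof is a one-paragraph appeal to Theorem 4.4 of Conn--Gould--Toint, verifying exactly the three hypotheses you identify (smoothness of $E_C$ and of the constraint $E_A - E_C$ on $\Omega$, boundedness and closedness of $\Omega$ via the polar-coordinate boundary representation, and linear independence of the single constraint gradient at any limit point). The obstacle you flag as the crux --- a uniform lower bound on $\|\nabla(E_A - E_C)\|$ at the limit, i.e.\ the regularity of the constraint --- is in the paper simply asserted without justification, so your more detailed sketch of the multiplier and residual estimates, and your concern about how that regularity is discharged, go beyond what the paper actually supplies.
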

\begin{proof}
    The result directly follows from \cite[Theorem 4.4]{CoGT91} since the following properties hold. The function $E_C(\bf)$ and the constraint $E_A(\bf) - E_C(\bf)$ of the objective function \eqref{eq:obj_fun} are smooth for $\bf \in \Omega$, the iterative sequence $\{ \bf^{(k)} \}$ is considered to lie within a closed, bounded domain $\Omega$, as the boundary points are expressed in polar coordinates in Algorithm \ref{alg:PCG}, and the gradient of a single constraint $E_A(\bf) - E_C(\bf)$ is linearly independent at any limit point.
\end{proof}

\section{Numerical Experiments}
\label{sec:5}
This section presents the numerical results of the distortion-balancing parameterization by the augmented Lagrangian method (Algorithm \ref{alg:AugLag}). In Fig. \ref{fig:MeshModel}, we showcase the benchmark triangular mesh models and the corresponding disk-shaped distortion-balancing parameterizations. These benchmark models are sourced from reputable repositories, including the AIM@SHAPE shape repository \cite{AIM}, the Stanford 3D scanning repository \cite{Stanford}, and Sketchfab \cite{Sketchfab}. Some mesh models were modified to ensure that each triangular face contains at least one interior vertex. All experiments were conducted in MATLAB on a laptop equipped with an AMD Ryzen 9 5900HS processor and 32 GB of RAM.

To quantify local angular and area distortion, we define the relative angular and area distortions as follows:
\begin{equation}
    \mathcal{D}_\mathrm{angle}(v, [u,v,w]) = \bigg| \frac{\angle (f(u), f(v), f(w)) - \angle (u, v, w)}{ \angle (u, v, w)} \bigg|.
\label{eq:Angle_dist}
\end{equation}
and
\begin{equation}
    \mathcal{D}_\mathrm{area}(\tau) =  \bigg| \frac{|f(\tau)| / |f(\M)| - |\tau| / |\M|}{|\tau| / |\M|} \bigg|,
\label{eq:Area_dist}
\end{equation}
where $|f(\M)|$ and $|\M|$ are areas of the image and surface, respectively. For an ideal angle-preserving mapping, $\mathcal{D}_\mathrm{angle}(v, [u,v,w]) = 0$ for all $\tau = [u, v, w] \in \mathcal{F(M)}$, whereas an ideal area-preserving mapping has $\mathcal{D}_\mathrm{area}(\tau) = 0$ for all $\tau \in \mathcal{F(M)}$.

\subsection{Results of Propose Method}
Table \ref{tab:AugLM_Result} summarizes the numerical results across all benchmark triangular meshes, including computational time (in seconds), conformal energy $E_C$ in \eqref{eq:Ec}, energy different $|E_A - E_C|$ in \eqref{eq:Ea}, approximated Lagrange multiplier $\lambda$, the number of folding triangles, and the mean and standard deviation of angle and area distortion $\mathcal{D}_\mathrm{angle}$ and $\mathcal{D}_\mathrm{area}$ in \eqref{eq:Angle_dist} and \eqref{eq:Area_dist}.

The energy difference between conformal and authalic energies indicates a well-balanced trade-off in global angular and area distortion. This is visualized in Fig. \ref{fig:Energy}, which illustrates the progression of conformal and authalic energies during the iterations. The balance in energy is also reflected in the local angular and area distortions, as evidenced by the similarities in the mean and standard deviation of $\mathcal{D}_\mathrm{angle}$ and $\mathcal{D}_\mathrm{area}$ with their distributions shown in Fig. \ref{fig:Distortion}. The approximated Lagrange multiplier $\lambda$ suggests that different meshes require different value of $\lambda$. Additionally, our proposed method demonstrates efficiency, achieving computational times of under one minute for all mesh models. Most importantly, the absence of folding triangles in all benchmark meshes confirms the bijectivity of the parameterizations produced by our method.

\begin{figure}[]
\centering
\resizebox{\textwidth}{!}{
\begin{tabular}{cccccccc}
\cline{1-2}\cline{4-5}\cline{7-8}
\multicolumn{2}{c}{Chinese Lion} && \multicolumn{2}{c}{Femur} && \multicolumn{2}{c}{Stanford Bunny} \\
$\F(\M) = 34,421$ & $\V(\M) = 17,334$ && $\F(\M) = 43,301$ & $\V(\M) = 21,699$ && $\F(\M) = 62,946$ & $\V(\M) = 31,593$ \\
\includegraphics[height=3cm]{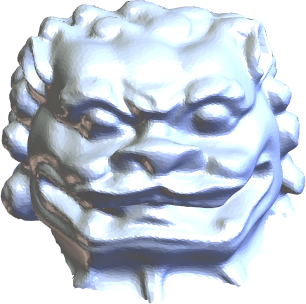} &
\includegraphics[height=3cm]{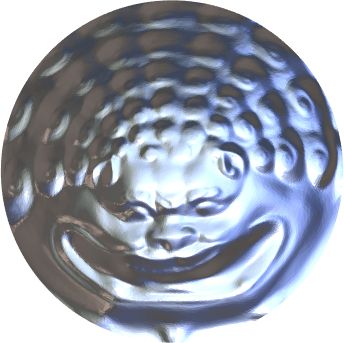} &&
\includegraphics[height=3cm]{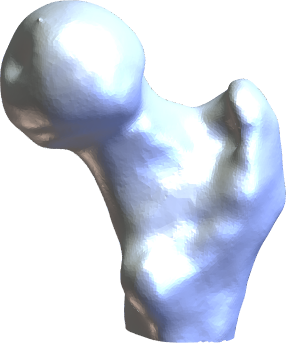} &
\includegraphics[height=3cm]{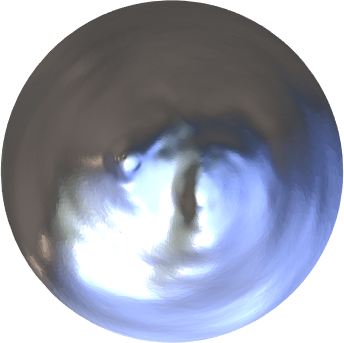} &&
\includegraphics[height=3cm]{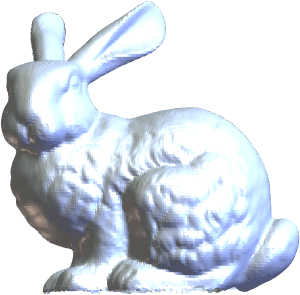} & 
\includegraphics[height=3cm]{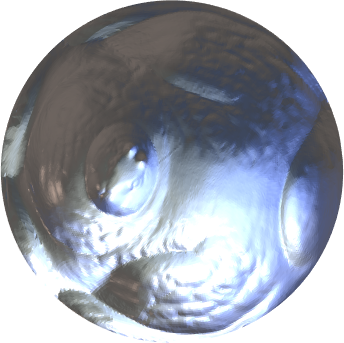} 
\\ \Cline{1.2pt}{1-2}\Cline{1.2pt}{4-5}\Cline{1.2pt}{7-8}
\multicolumn{2}{c}{Max Planck} && \multicolumn{2}{c}{Human Brain} && \multicolumn{2}{c}{Left Hand} \\
$\F(\M) = 82,977$ & $\V(\M) = 41,588$ && $\F(\M) = 96,811$ & $\V(\M) = 48,463$ && $\F(\M) = 105,780$ & $\V(\M) = 53,011$ \\
\includegraphics[height=3cm]{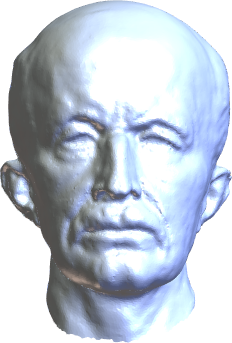} &
\includegraphics[height=3cm]{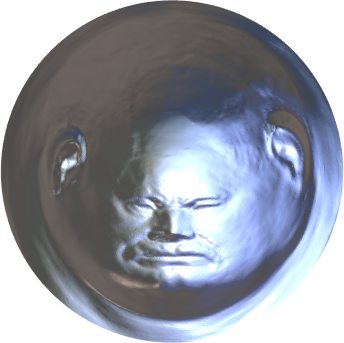} &&
\includegraphics[height=3cm]{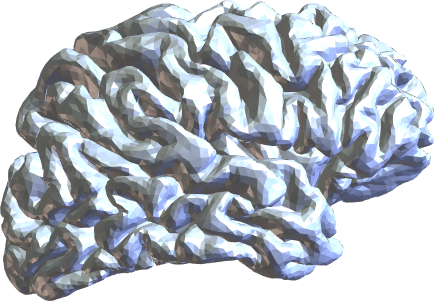} &
\includegraphics[height=3cm]{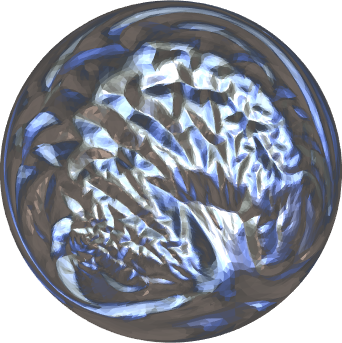} &&
\includegraphics[height=3cm]{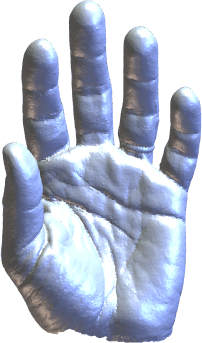} &
\includegraphics[height=3cm]{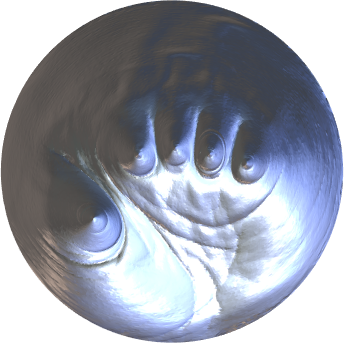} 
\\ \Cline{1.2pt}{1-2}\Cline{1.2pt}{4-5}\Cline{1.2pt}{7-8}
\multicolumn{2}{c}{Knit Cap Man} && \multicolumn{2}{c}{Bimba Statue} && \multicolumn{2}{c}{Buddha} \\
$\F(\M) = 118,849$ & $\V(\M) = 59,561$ && $\F(\M) = 433,040$ & $\V(\M) = 216,873$ && $\F(\M) = 945,722$ & $\V(\M) = 473,362$ \\
\includegraphics[height=3cm]{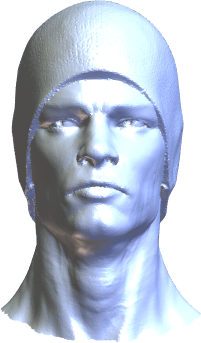} & 
\includegraphics[height=3cm]{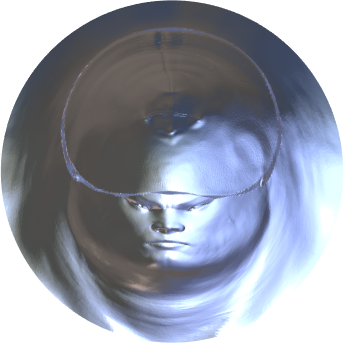} &&
\includegraphics[height=3cm]{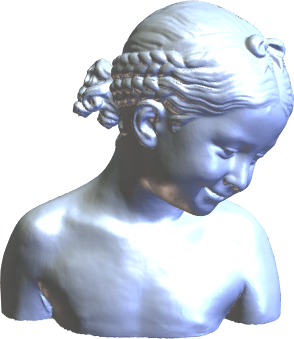} &
\includegraphics[height=3cm]{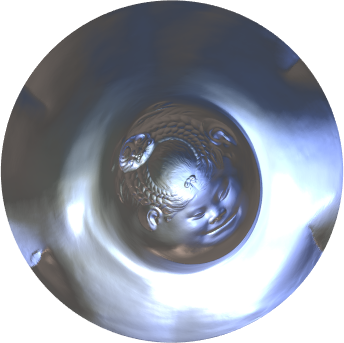} &&
\includegraphics[height=3cm]{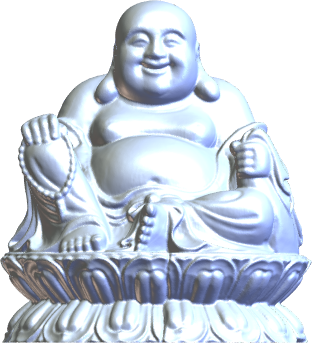} & 
\includegraphics[height=3cm]{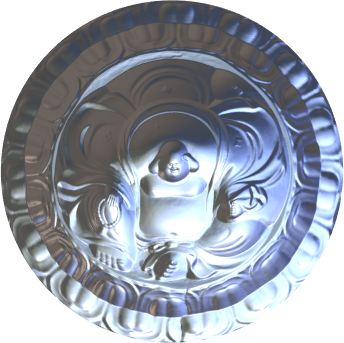} 
\\ \cline{1-2}\cline{4-5}\cline{7-8}
\end{tabular}
}
\caption{The benchmark triangular mesh models and associated disk-shaped distortion-balancing parameterization produced by Algorithm \ref{alg:AugLag}.}
\label{fig:MeshModel}
\end{figure}

\begin{table}[]
\centering
\caption{The numerical results of Algorithm \ref{alg:AugLag}. Time: computational time costs in seconds; $E_C$ : conformal energy \eqref{eq:Ec}; $E_A$: authalic energy \eqref{eq:Ea}; $\lambda$: approximated Lagrange multipler; Iter. no.: number of iterations; $\#$Foldings: number of folding triangles; $\mathcal{D}_\mathrm{angle}$: angular distortion \eqref{eq:Angle_dist}; $\mathcal{D}_\mathrm{area}$: area distortion \eqref{eq:Area_dist}.
}
\label{tab:AugLM_Result}
\resizebox{\textwidth}{!}{
\begin{tabular}{lrcccrccccc}
\hline 
\multirow{3}{*}{Model name} & \multicolumn{10}{c}{Augmented Lagrangian method, Algorithm \ref{alg:AugLag}.} \\
\cline{2-11}
& Time      & \multirow{2}{*}{$E_C$} & Energy diff. & \multirow{2}{*}{$\lambda$} & Iter. & $\#$Fold- & \multicolumn{2}{c}{$\mathcal{D}_\mathrm{angle}$} & \multicolumn{2}{c}{$\mathcal{D}_\mathrm{area}$} \\ 
& {(secs.)} &                        & $|E_A - E_C|$&                            & no.   &  ings      & Mean           & SD             & Mean        & SD \\
\hline 
Chinese Lion  & 1.34 & $4.75\times 10^{-1}$ & $7.34\times 10^{-6}$ & 0.38 & 9  & 0 & 0.20 & 0.17 & 0.33 & 0.20 \\ 
Femur         & 2.62 & $2.87\times 10^{~0~}$& $8.34\times 10^{-7}$ & 0.39 & 9  & 0 & 0.27 & 0.32 & 0.81 & 0.51 \\ 
Stanford Bunny& 4.99 & $1.10\times 10^{~0~}$& $3.33\times 10^{-6}$ & 0.47 & 9  & 0 & 0.26 & 0.27 & 0.48 & 0.33 \\
Max Planck    & 7.30 & $1.65\times 10^{~0~}$& $6.76\times 10^{-7}$ & 0.31 & 8  & 0 & 0.25 & 0.27 & 0.56 & 0.43 \\
Human Brain   & 10.10& $1.92\times 10^{~0~}$& $1.82\times 10^{-6}$ & 0.26 & 8  & 0 & 0.25 & 0.25 & 0.58 & 0.54 \\
Left Hand     & 14.75& $2.23\times 10^{~0~}$& $6.64\times 10^{-6}$ & 0.56 & 10 & 0 & 0.33 & 0.33 & 0.78 & 0.32 \\
Knit Cap Man  & 8.87 & $2.12\times 10^{~0~}$& $7.83\times 10^{-6}$ & 0.37 & 8  & 0 & 0.28 & 0.30 & 0.69 & 0.44 \\
Bimba Statue  & 11.42& $1.69\times 10^{~0~}$& $5.47\times 10^{-7}$ & 0.49 & 11 & 0 & 0.30 & 0.30 & 0.70 & 0.22 \\
Buddha        & 36.43& $7.62\times 10^{-1}$ & $7.11\times 10^{-7}$ & 0.30 & 12 & 0 & 0.25 & 0.21 & 0.41 & 0.27 \\
\hline 
\end{tabular}
}
\end{table}

\begin{figure}[h]
\centering
\resizebox{\textwidth}{!}{
\begin{tabular}{ccc}
Chinese Lion & Femur & Stanford Bunny \\
\includegraphics[width=4.5cm]{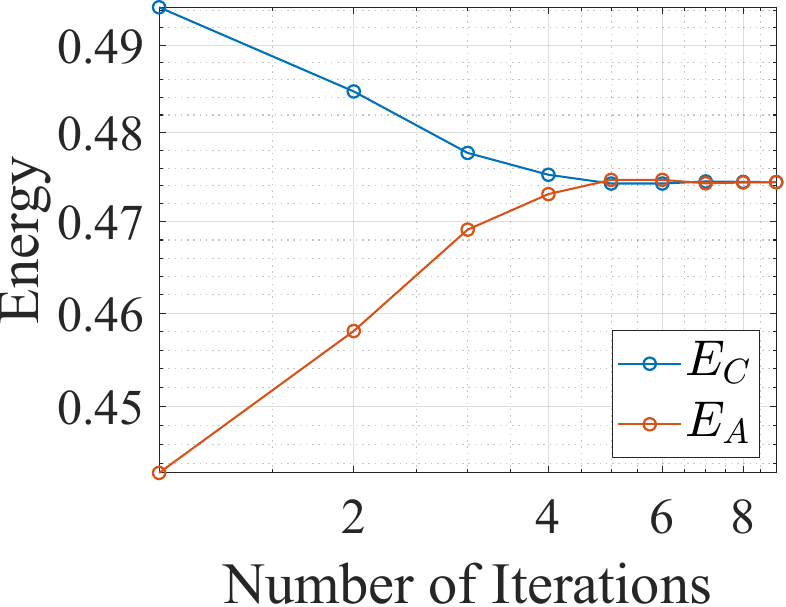} &
\includegraphics[width=4.5cm]{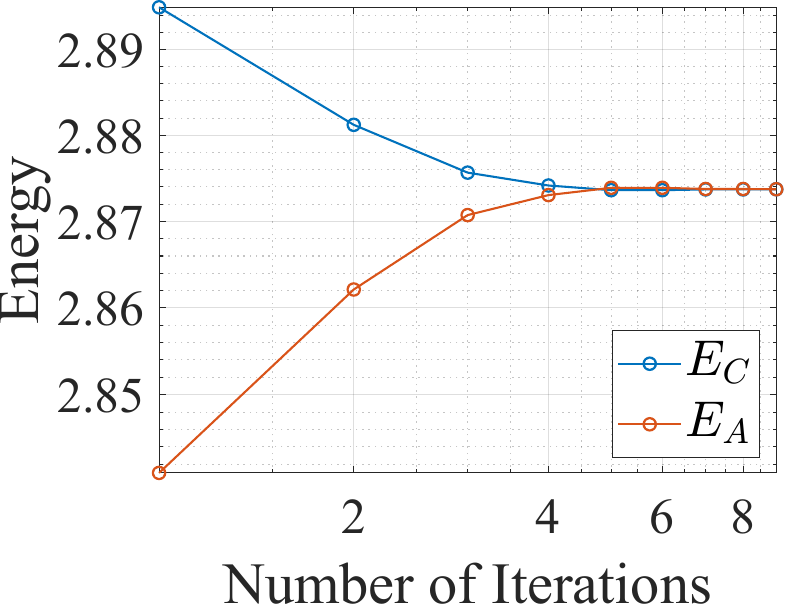} &
\includegraphics[width=4.5cm]{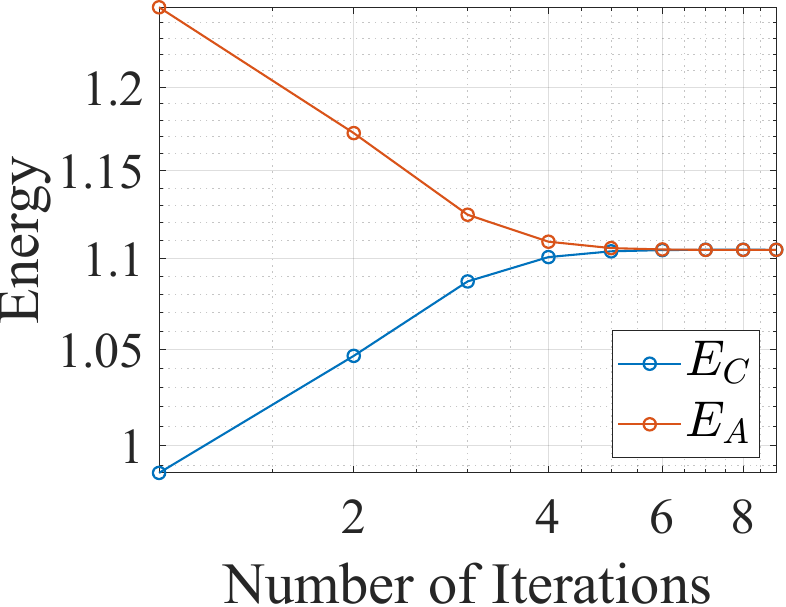} \\ [0.5cm]
Max Planck & Human Brain & Left Hand \\
\includegraphics[width=4.5cm]{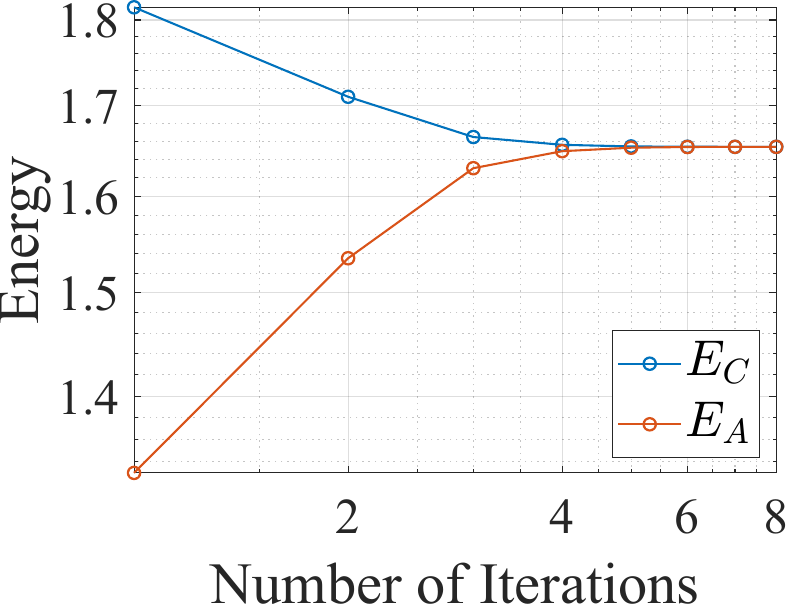} &
\includegraphics[width=4.5cm]{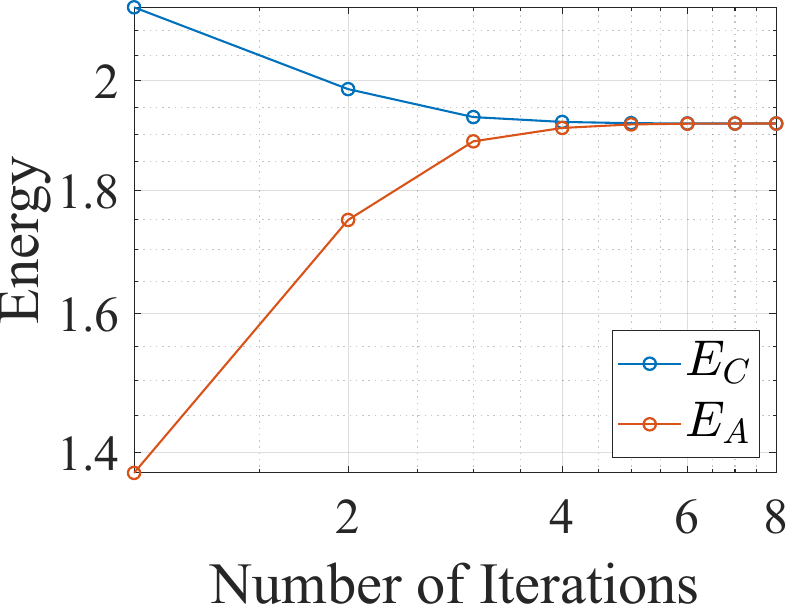} &
\includegraphics[width=4.5cm]{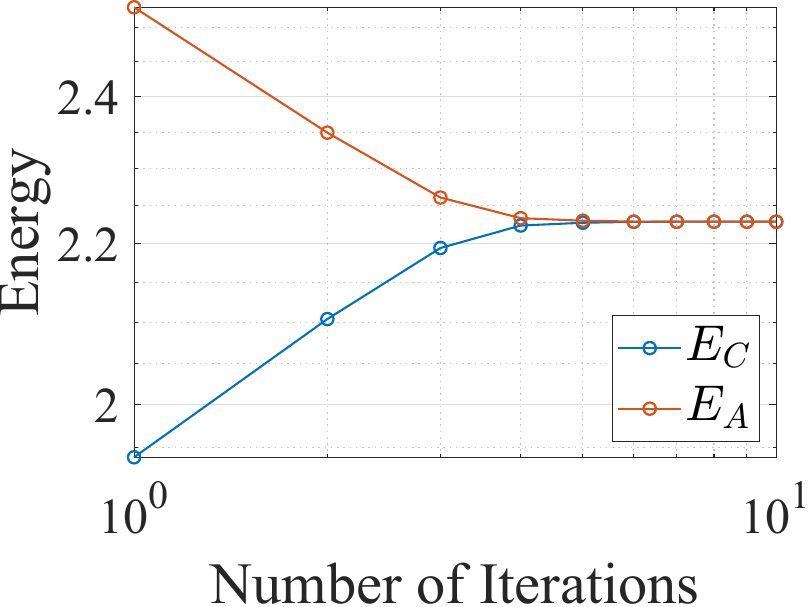} \\ [0.5cm]
Knit Cap Man & Bimba Statue & Buddha \\
\includegraphics[width=4.5cm]{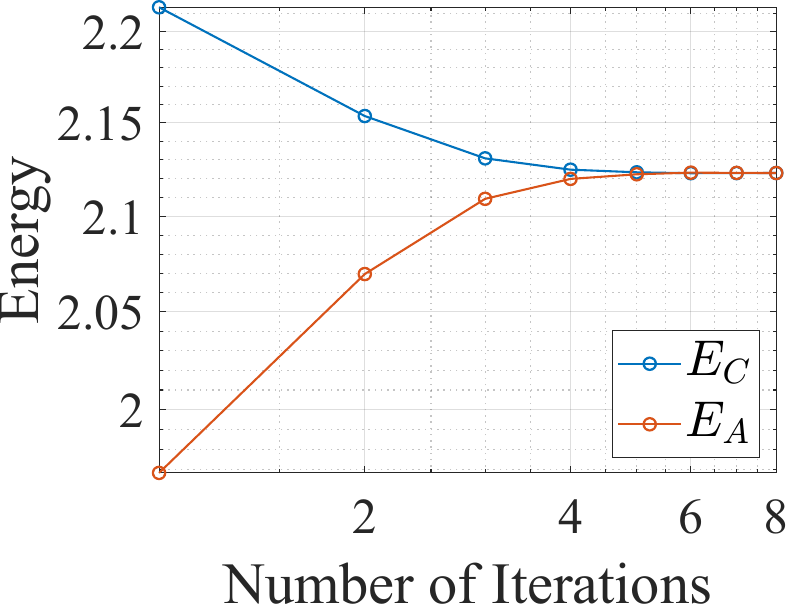} &
\includegraphics[width=4.5cm]{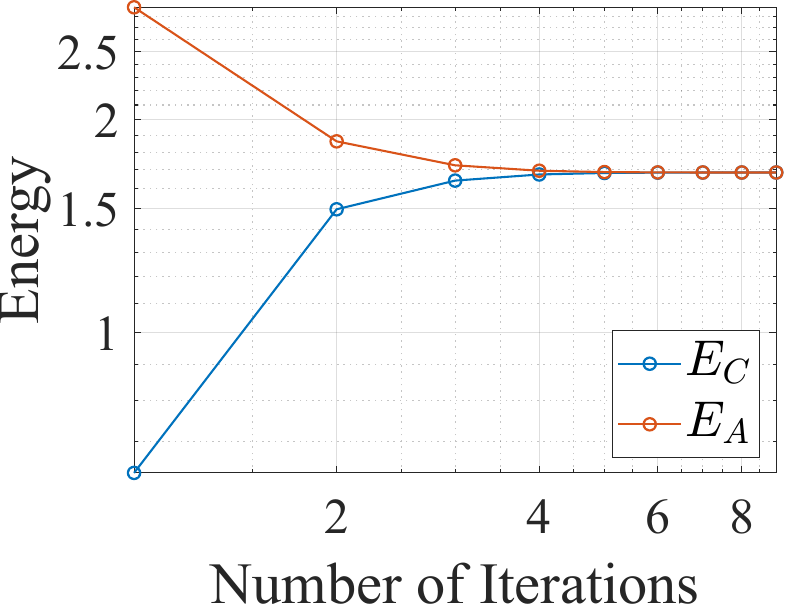} &
\includegraphics[width=4.5cm]{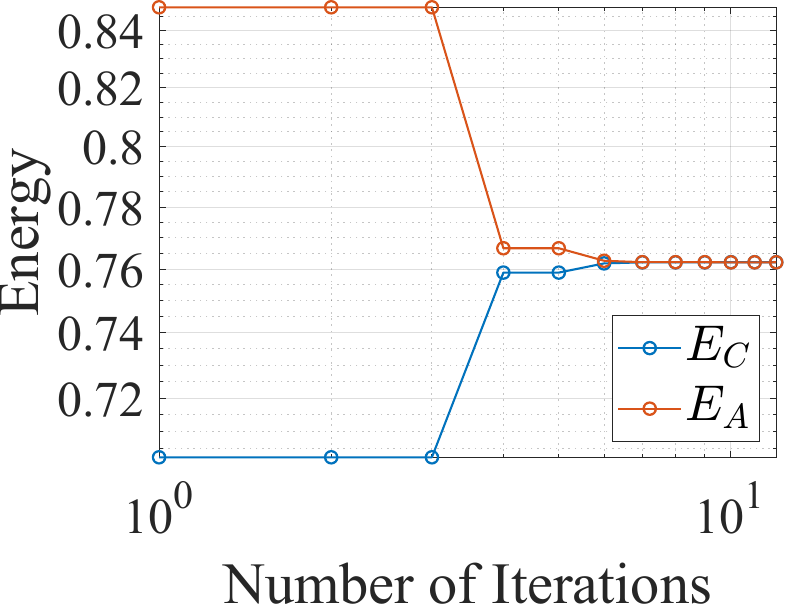} \\
\end{tabular}
}
\caption{The conformal \eqref{eq:Ec} and authalic \eqref{eq:Ea} energy during iterations by Algorithm \ref{alg:AugLag} among all benchmark triangular meshes.}
\label{fig:Energy}
\end{figure}

\begin{figure}[]
\centering
\resizebox{\textwidth}{!}{
\begin{tabular}{ccc}
Chinese Lion & Femur & Stanford Bunny \\
\begin{overpic}[width=4.5cm]{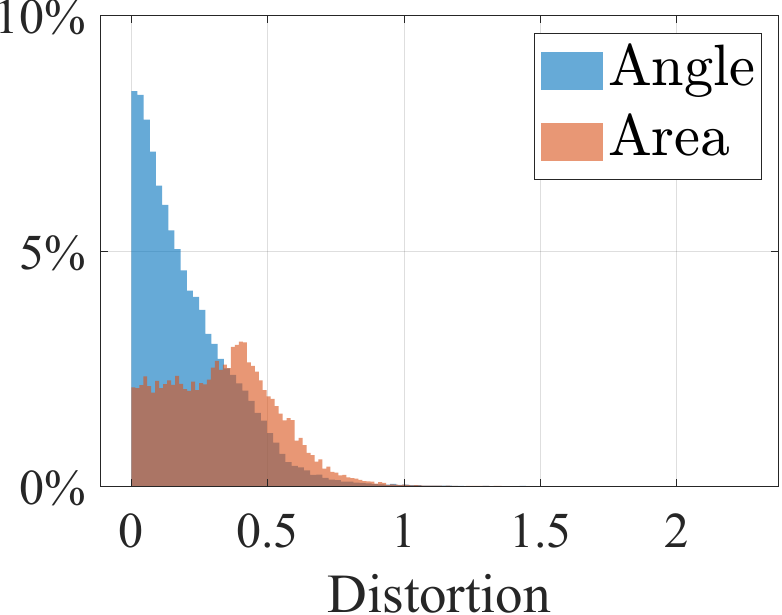}
    \put(55,20){\includegraphics[width=1.2cm]{ChineseLion_Mesh.png}}
\end{overpic} &
\begin{overpic}[width=4.5cm]{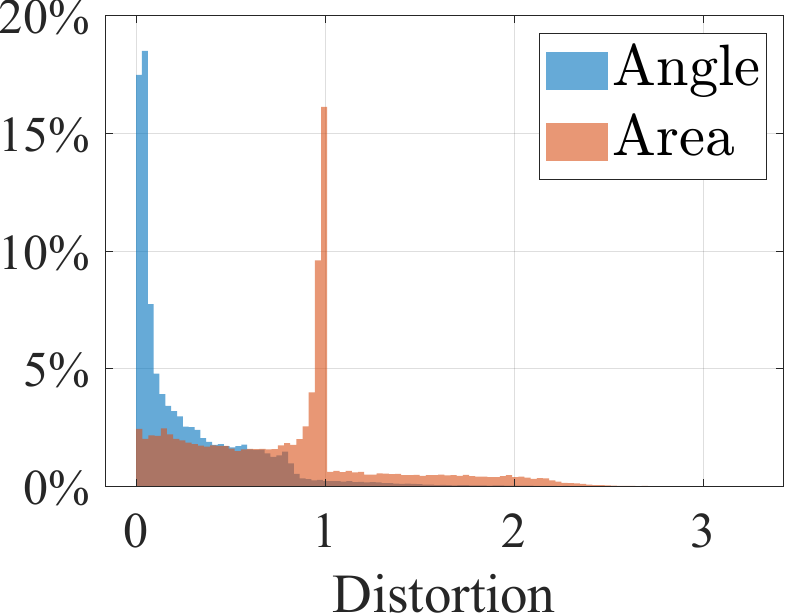}
    \put(60,20){\includegraphics[width=1.0cm]{Femur_Mesh.png}}
\end{overpic} &
\begin{overpic}[width=4.5cm]{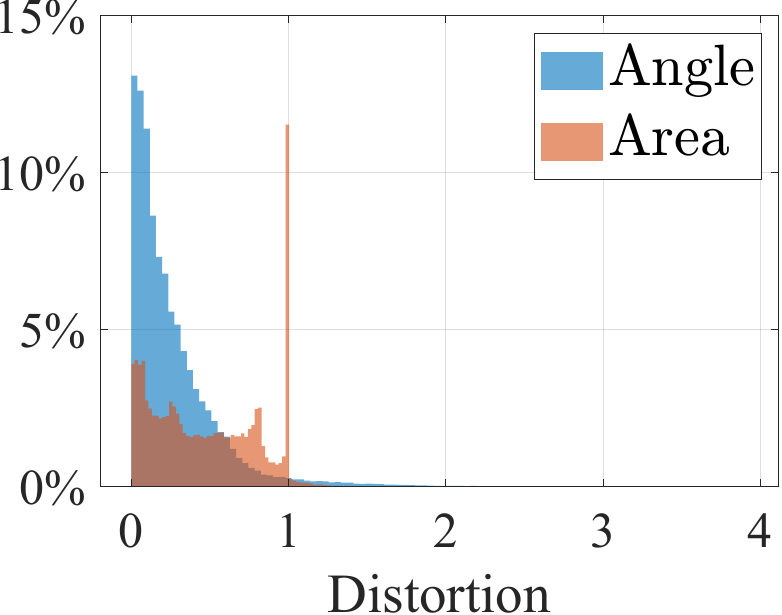}
    \put(55,20){\includegraphics[width=1.25cm]{StanfordBunny_Mesh.png}}
\end{overpic} \\[0.5cm]
Max Planck & Human Brain & Left Hand \\
\begin{overpic}[width=4.5cm]{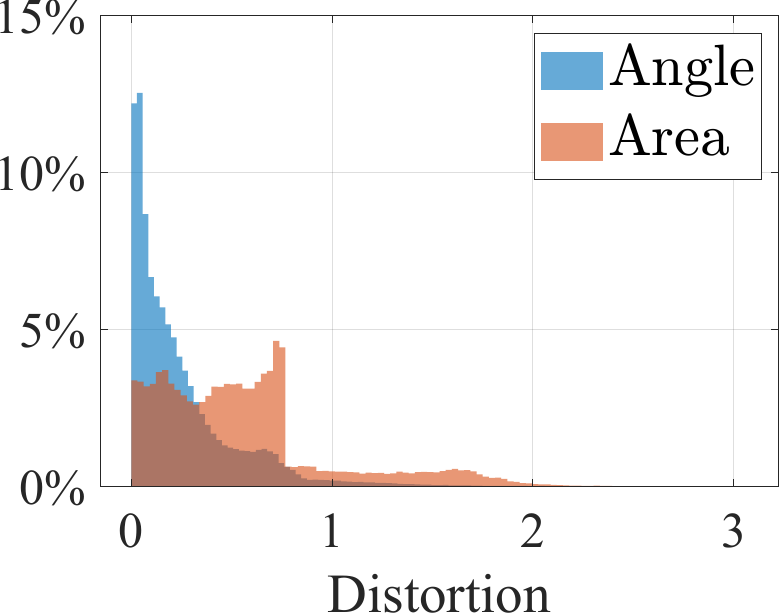}
    \put(58,18){\includegraphics[width=0.95cm]{MaxPlanck_Mesh.png}}
\end{overpic} &
\begin{overpic}[width=4.5cm]{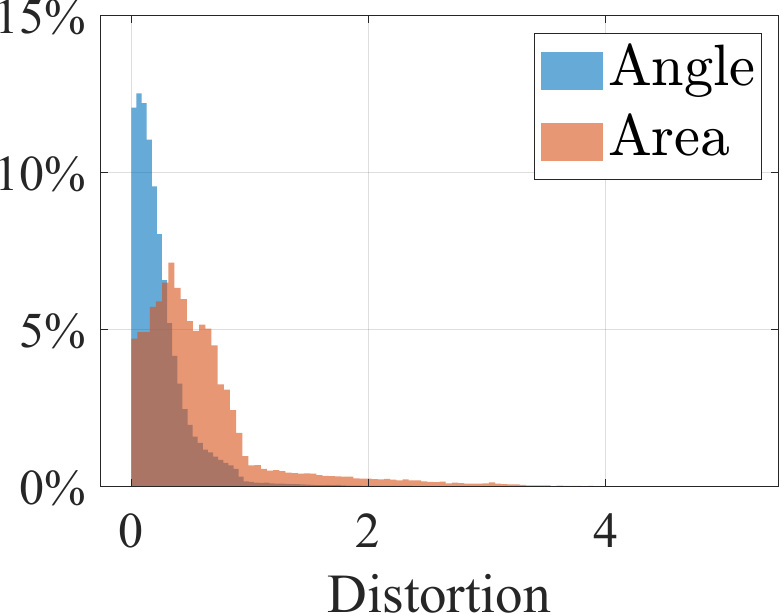}
    \put(55,20){\includegraphics[width=1.45cm]{HumanBrain_Mesh.png}}
\end{overpic} &
\begin{overpic}[width=4.5cm]{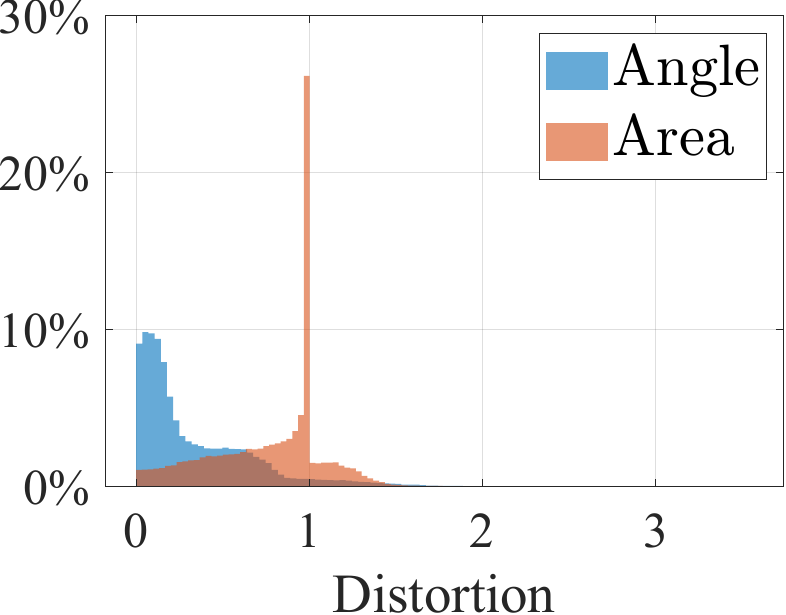}
    \put(60,20){\includegraphics[width=0.8cm]{LeftHand_Mesh.png}}
\end{overpic} \\[0.5cm]
Knit Cap Man & Bimba Statue & Buddha \\
\begin{overpic}[width=4.5cm]{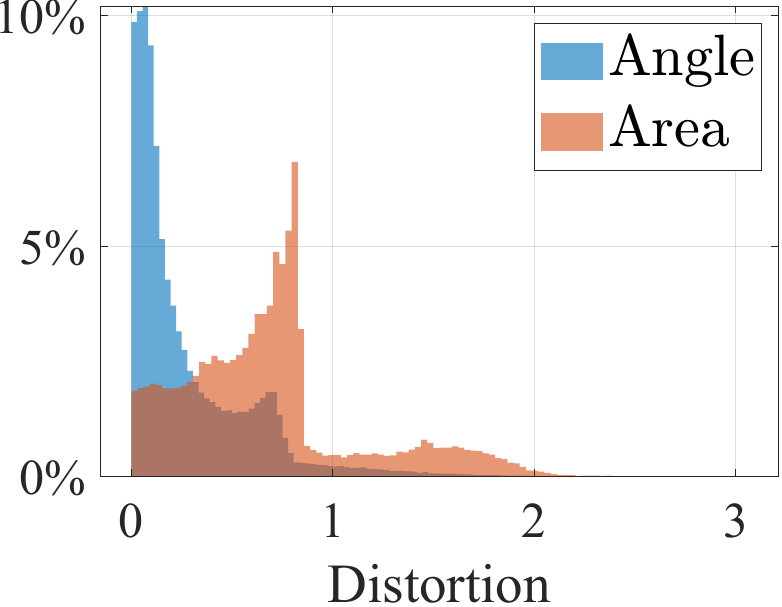}
    \put(60,18){\includegraphics[width=0.82cm]{KnitCapMan_Mesh.png}}
\end{overpic} &
\begin{overpic}[width=4.5cm]{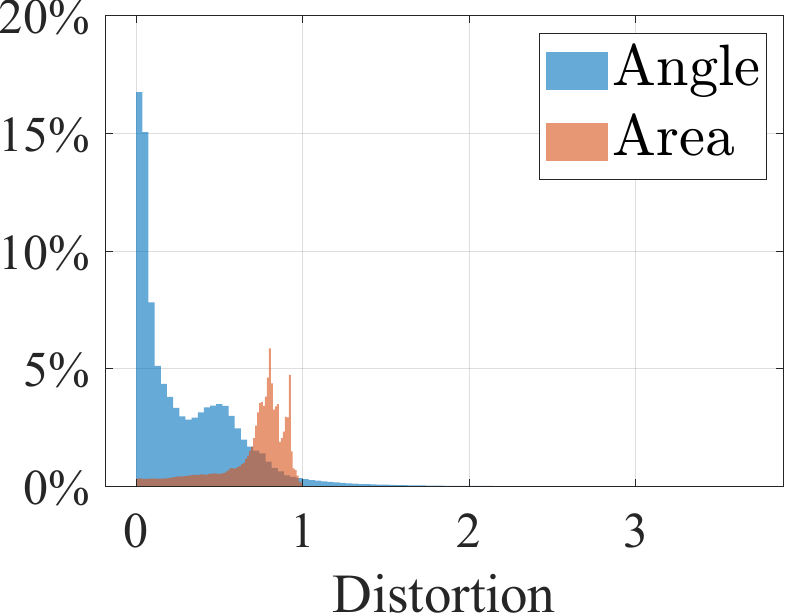}
    \put(58,18){\includegraphics[width=1.2cm]{BimbaStatue_Mesh.png}}
\end{overpic} &
\begin{overpic}[width=4.5cm]{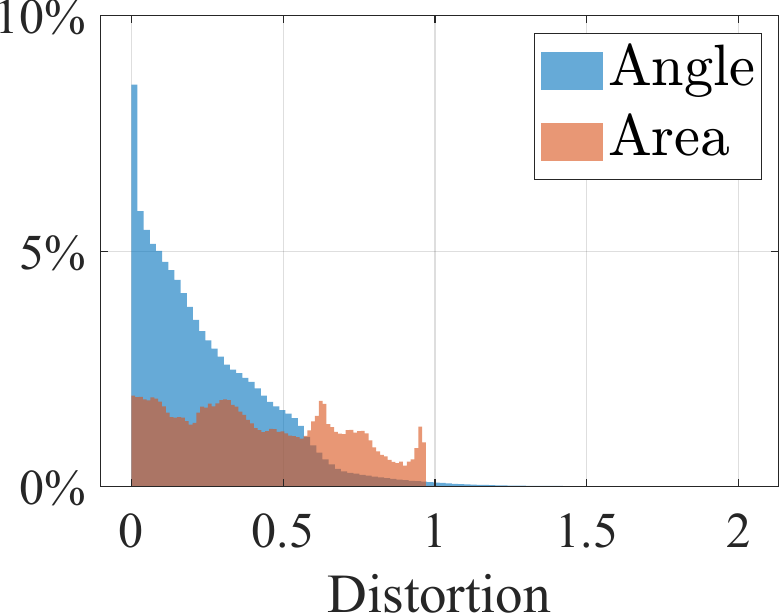}
    \put(55,18){\includegraphics[width=1.25cm]{Buddha_Mesh.png}}
\end{overpic} \\
\end{tabular}
}
\caption{The histogram of angular \eqref{eq:Angle_dist} and area \eqref{eq:Area_dist} distortion of the parameterization produced by Algorithm \ref{alg:AugLag} among all benchmark triangular meshes.}
\label{fig:Distortion}
\end{figure}

\subsection{Comparison to the Previous Method}
\label{sec:5.2}
In the previous work \cite{Yueh23}, to obtain the distortion-balancing mapping, it minimizes the balance energy  
\begin{equation} \label{eq:Eb}
E_B(\bf) = \frac{1}{2} (E_C(\bf) + E_S(\bf))
\end{equation}
using a fixed-point method, and the boundary points $\bf_\B$ are selected via arc length parameterization of $\partial \M$ and kept fixed. We can regard \eqref{eq:Eb} as the Lagrange function in \eqref{eq:Lag} as $\lambda = 0.5$. The numerical results with tolerance set by energy deficit less than $10^{-6}$ are presented in Table \ref{tab:BEM_Result}. The zeros folding triangles show that all parameterizations provided are bijective. The difference between the conformal and authalic energy suggests $\lambda \neq 0.5$ in most cases. This discrepancy is also reflected in the angular and area distortion, as defined by \eqref{eq:Angle_dist} and \eqref{eq:Area_dist}.

To directly compare the fixed-point method \cite{Yueh23} with our proposed method, we use the accuracy metric of the mapping produced by Algorithm \ref{alg:AugLag} divided by the accuracy metric of the mapping provided by the fixed-point method, including the conformal and authalic energy, and the mean angular and area distortion (see Fig. \ref{fig:BEM_comparison}). The results indicate that the fixed-point method tends to favor either angle-preserving or area-preserving properties and the degree of bias varies across different triangular meshes.

\begin{table}[]
\centering
\caption{The numerical results of the fixed-point method \cite{Yueh23} with the tolerance set by energy deficit less than $10^{-6}$ and fix the boundary point by the arc length parameterization of $\partial \M$. Time: computational time costs in seconds; $E_C$ : conformal energy \eqref{eq:Ec}; $E_A$: authalic energy \eqref{eq:Ea}; Iter. no.: number of iterations; $\#$Foldings: number of folding triangles; $\mathcal{D}_\mathrm{angle}$: angular distortion \eqref{eq:Angle_dist}; $\mathcal{D}_\mathrm{area}$: area distortion \eqref{eq:Area_dist}.
}
\label{tab:BEM_Result}
\resizebox{\textwidth}{!}{
\begin{tabular}{lrccrccccc}
\hline 
\multirow{3}{*}{Model name} & \multicolumn{9}{c}{the fixed-point method \cite{Yueh23}.} \\
\cline{2-10}
& Time      & \multirow{2}{*}{$E_C$} & \multirow{2}{*}{$E_A$} & Iter. & $\#$Fold- & \multicolumn{2}{c}{$\mathcal{D}_\mathrm{angle}$} & \multicolumn{2}{c}{$\mathcal{D}_\mathrm{area}$} \\ 
& {(secs.)} &                        &                        & no.   &  ings      & Mean           & SD             & Mean        & SD \\
\hline 
Chinese Lion  & 0.66 & $4.26\times 10^{-1}$ & $6.12\times 10^{-1}$ &13 & 0 & 0.18 & 0.16 & 0.38 & 0.22 \\ 
Femur         & 1.84 & $2.47\times 10^{~0~}$& $3.62\times 10^{~0~}$&28 & 0 & 0.24 & 0.29 & 0.90 & 0.58 \\ 
Stanford Bunny& 2.35 & $8.31\times 10^{-1}$ & $1.65\times 10^{~0~}$&22 & 0 & 0.20 & 0.22 & 0.58 & 0.41 \\
Max Planck    & 4.59 & $1.79\times 10^{~0~}$& $1.58\times 10^{~0~}$&33 & 0 & 0.26 & 0.28 & 0.54 & 0.42 \\
Human Brain   & 8.49 & $2.23\times 10^{~0~}$& $1.30\times 10^{~0~}$&57 & 0 & 0.28 & 0.28 & 0.48 & 0.43 \\
Left Hand     & 5.28 & $1.43\times 10^{~0~}$& $3.42\times 10^{~0~}$&29 & 0 & 0.24 & 0.25 & 0.92 & 0.50 \\
Knit Cap Man  & 5.35 & $1.94\times 10^{~0~}$& $2.57\times 10^{~0~}$&26 & 0 & 0.26 & 0.29 & 0.75 & 0.49 \\
Bimba Statue  & 20.13& $4.72\times 10^{-1}$ & $3.18\times 10^{~0~}$&20 & 0 & 0.16 & 0.17 & 0.96 & 0.31 \\
Buddha        & 36.11& $5.51\times 10^{-1}$ & $1.11\times 10^{~0~}$&13 & 0 & 0.19 & 0.18 & 0.51 & 0.31 \\
\hline 
\end{tabular}
}
\end{table}

\begin{figure}[]
\centering
\resizebox{\textwidth}{!}{
\begin{tabular}{cc}
\includegraphics[height=4.5cm]{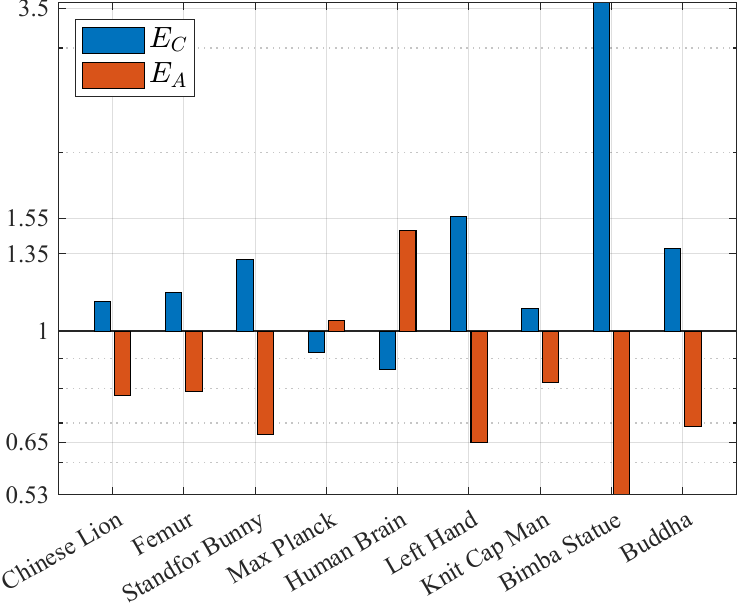} &
\includegraphics[height=4.5cm]{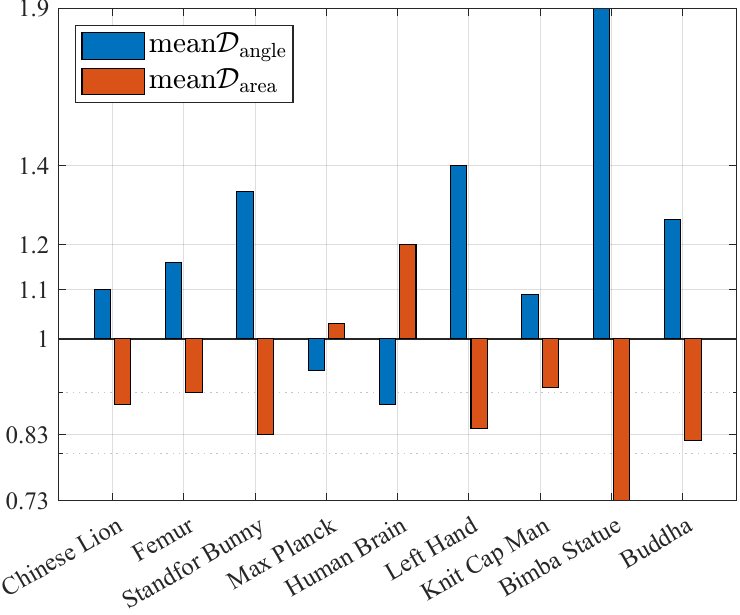} 
\end{tabular}
}
\caption{The ratio of accuracy metric of mapping produced by Algorithm \ref{alg:AugLag} divided by the accuracy metric of mapping provided by the fixed-point method \cite{Yueh23}, among all benchmark triangular meshes. $E_C$ : conformal energy \eqref{eq:Ec}; $E_A$: authalic energy \eqref{eq:Ea}; mean$\mathcal{D}_\mathrm{angle}$: mean of angular distortion \eqref{eq:Angle_dist}; mean$\mathcal{D}_\mathrm{area}$: mean of area distortion \eqref{eq:Area_dist}.}
\label{fig:BEM_comparison}
\end{figure}

\section{Application in Geometry Image}
\label{sec:6}
A geometry image \cite{GuGH02} is an RGB image that encodes the geometry of a surface by storing information, such as vertex positions and normal vectors, in each pixel, enabling surface reconstruction. This is achieved by parametrizing the surface with the planer rectangular domain. 

We can construct the geometry image using the square-shaped parameterization described in Sect. \ref{sec:3.4}. After parameterization, the geometry image stores the vertex information on the 3D face in each corresponding pixel of the 2D image, with the $x$, $y$, and $z$-coordinate of the vertices represented in the red, green, and blue channels of the image (see Fig. \ref{fig:GeoImg}). To reconstruct the triangular mesh of the 3D surface from the geometry image, a structured triangular mesh is generated on the 2D image by inserting a center point into each quad element. Each vertex from the 2D image is then mapped to its corresponding vertex on the 3D surface.

The mapping used to construct a geometry image affects the quality of the reconstructed surface. Conformal mapping preserves the shape of triangles but may introduce defects in the overall reconstruction. In contrast, authalic mapping provides a more uniform vertex distribution, which benefits the reconstruction but compromises the quality of triangles. We hypothesize that distortion-balancing mapping can offer a compromise, balancing the strengths of both conformal and authalic mapping. This is examined by numerical experiments discussed in the next section.

\subsection{Numerical Results of Reconstruction}
To evaluate the efficacy of surface reconstruction by different parameterizations in the same geometry image, we use the following metrics,
\begin{align}
    d_{\textrm{angle}} &= \min \Big( |\angle [v_i, v_j, v_k] - 45|, ~~ |\angle [v_i, v_j, v_k] -90| \Big), \label{eq:angle_d}\\
    d_{\textrm{area}} &= \Big( \big| |\tau| - \mean_{\tau \in \mathcal{F(M)}} ( |\tau|) \big| \Big) / \mean_{\tau \in \mathcal{F(M)}} ( |\tau|) \label{eq:area_d}
\end{align}
because the triangular mesh generated in the image is by inserting a center point into each quad element. If $f$ is conformal, $d_{\textrm{angle}} = 0$, by contrast, if $f$ is area-preserving, $d_{\textrm{area}} = 0$. 

Fig. \ref{fig:Brain Reconstruct} shows the surface reconstruction of the Human Brain triangular mesh using a $100 \times 100$ geometry image. The conformal and authalic mapping is by the fixed-point method \cite{Yueh23} and the distortion-balancing is by Algorithm \ref{alg:AugLag} with modification in Section \ref{sec:3.4}. We also provide a histogram illustrating the distribution of angles in degree and the areas normalized by the mean area of the reconstructed surface. Additionally, the mean and standard deviation for the metrics $d_{\textrm{angle}}$ and $d_{\textrm{area}}$ are also reported.

As anticipated, conformal mapping results in a wide variation in areas of different triangles, where most triangles have small areas, but a few outliers are 60 times larger than the mean, which causes the reconstructed surface to lose some detail. In contrast, authalic mapping achieves a uniform distribution of triangle areas, preserving surface detail but producing many visibly thin triangles. The distortion-balancing mapping, however, appears to reach balanced angle and area distributions,  maintaining detail and geometric integrity.

\begin{figure}[]
\center
\resizebox{\textwidth}{!}{
\begin{tabular}{ccccccc}
\includegraphics[height=3cm]{ChineseLion_Mesh.png} &
\raisebox{1.2cm}{$\longrightarrow$} & 
\includegraphics[height=3cm]{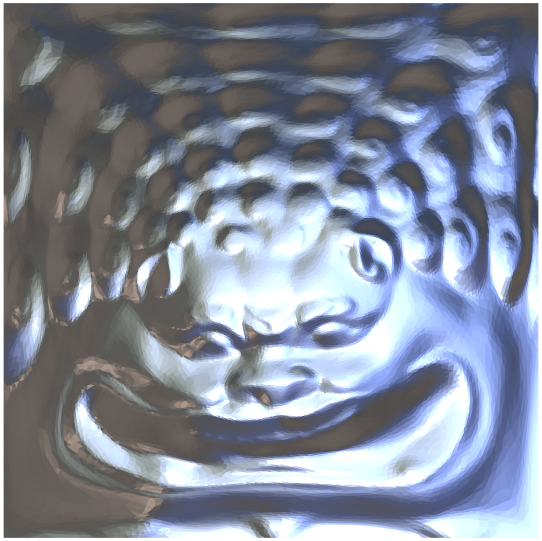} & \raisebox{1.2cm}{$\longrightarrow$} &
\begin{overpic}[width=3cm]{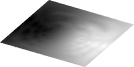}
    \put(0,20){\includegraphics[width=3cm]{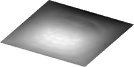}}
    \put(0,40){\includegraphics[width=3cm]{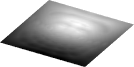}}
\end{overpic} & 
\raisebox{1.2cm}{$\longrightarrow$} & 
\includegraphics[height=3cm]{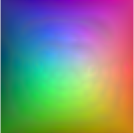}\\
Triangular Mesh & & Parameterization & & Grayscale images in each axis & & Combine to an RGB image
\end{tabular}
}
\caption{The process of construction in geometry image.
We first parameterized a simplicial surface onto the unit square and then created an image such that each pixel stored $x$, $y$, and $z$ coordinates of the surface by red, green, and blue channels,
}
\label{fig:GeoImg}
\end{figure}

\begin{figure}[]
\centering
\resizebox{\textwidth}{!}{
\begin{tabular}{cccccc}
\multicolumn{2}{c}{Balanced} & \multicolumn{2}{c}{Conformal} & \multicolumn{2}{c}{Authalic} \\
\multicolumn{2}{c}{\includegraphics[height=4.0cm]{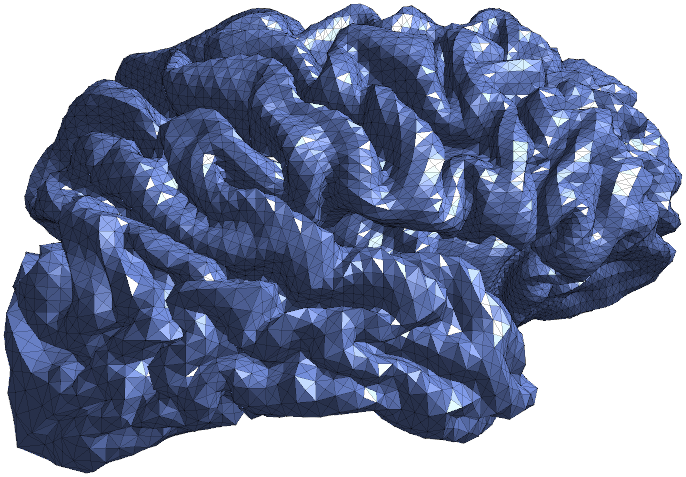}} & 
\multicolumn{2}{c}{\includegraphics[height=4.0cm]{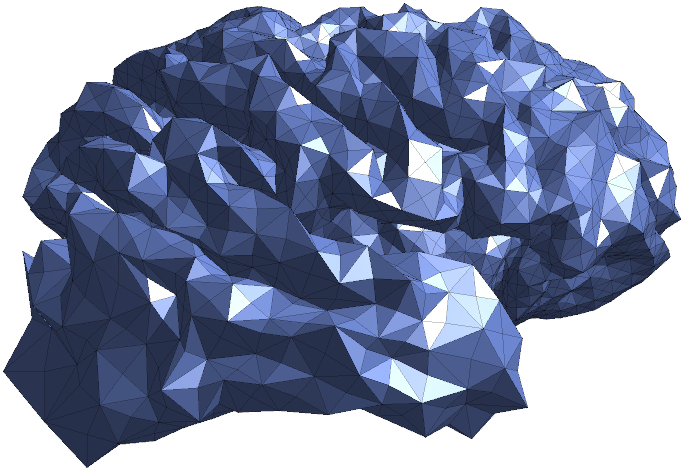}} & 
\multicolumn{2}{c}{\includegraphics[height=4.0cm]{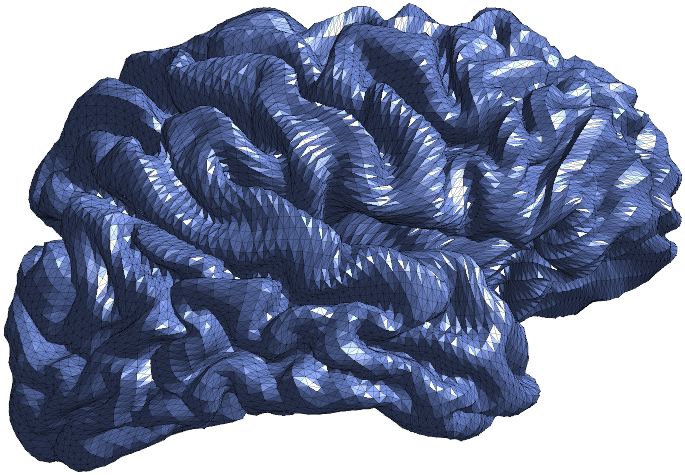}} \\
\includegraphics[height=2.5cm]{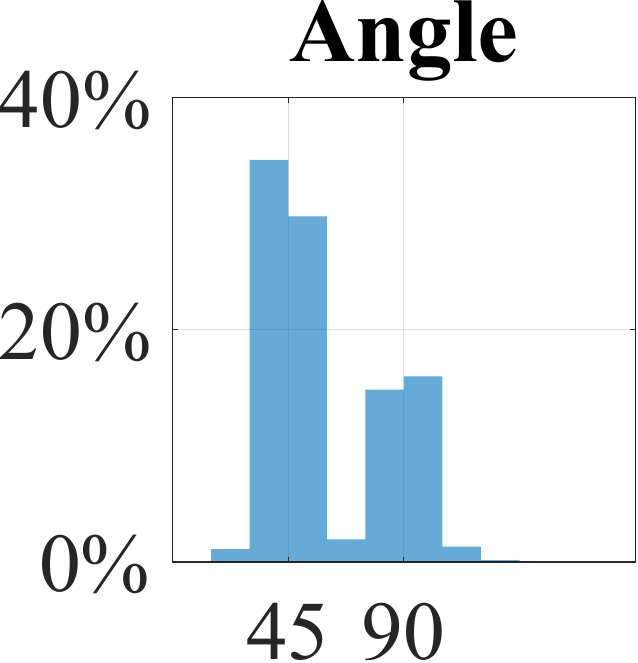} &
\includegraphics[height=2.5cm]{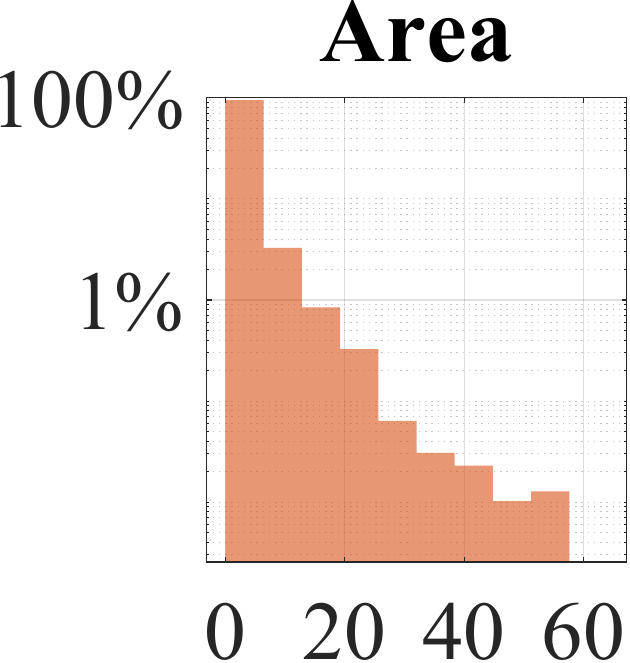} &
\includegraphics[height=2.5cm]{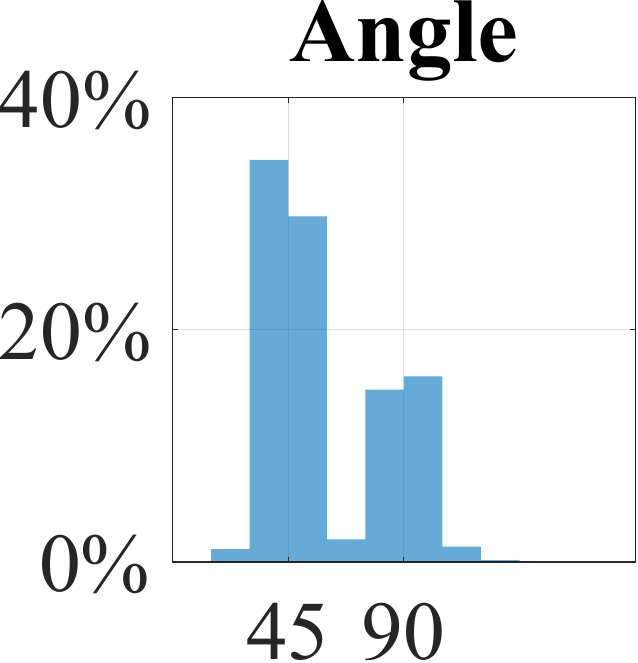} &
\includegraphics[height=2.5cm]{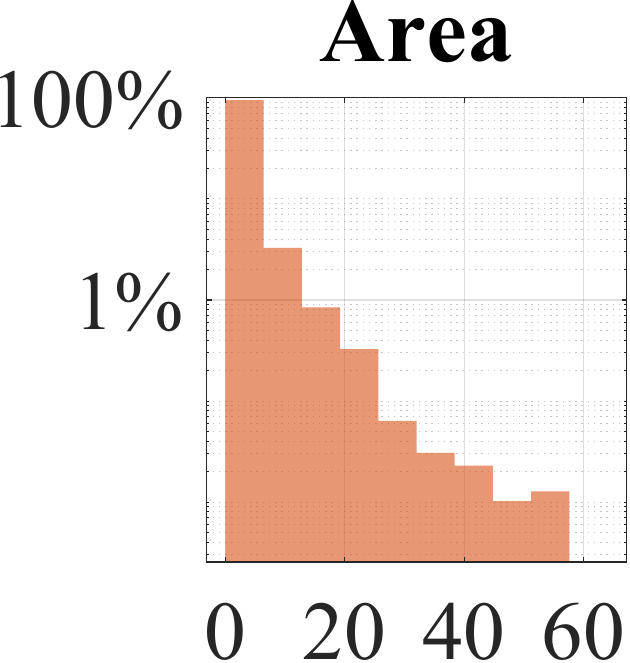} &
\includegraphics[height=2.5cm]{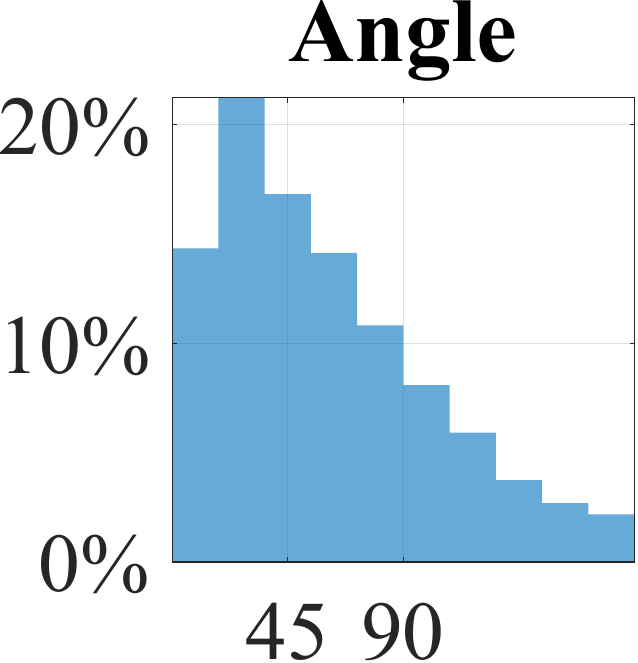} & 
\includegraphics[height=2.5cm]{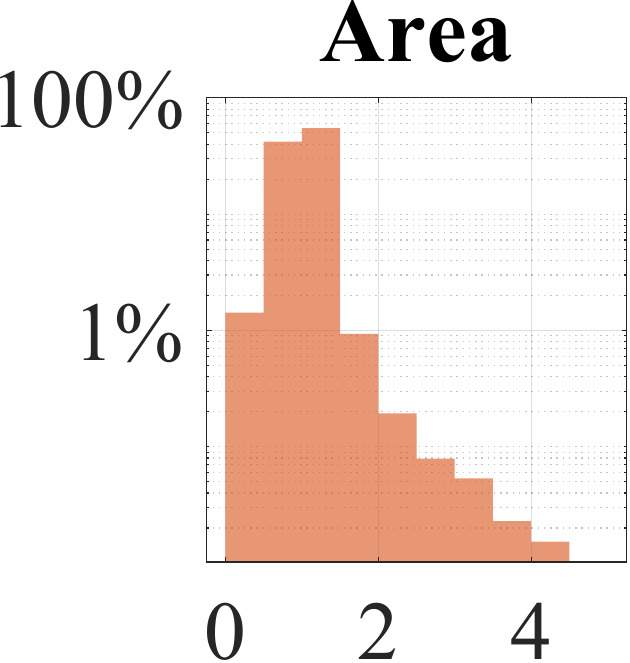} \\
$\mean d_{\textrm{angle}} = 16.3$ & $\mean d_{\textrm{area}} = 0.6$ &
$\mean d_{\textrm{angle}} = 3.6$ & $\mean d_{\textrm{area}} = 1.4$ &
$\mean d_{\textrm{angle}} = 20.4$ & $\mean d_{\textrm{area}} = 0.1$ \\
$\SD d_{\textrm{angle}} = 14.8$ & $\SD d_{\textrm{area}} = 0.6$ &
$\SD d_{\textrm{angle}} = 4.3$ & $\SD d_{\textrm{area}} = 2.5$ &
$\SD d_{\textrm{angle}} = 16.3$ & $\SD d_{\textrm{area}} = 0.2$ \\
\end{tabular}
}
\caption{The result of surface reconstruction by geometry image of Human Brain triangular mesh with $100 \times 100$ geometry image, in which $d_{\textrm{angle}}$ and $d_{\textrm{angle}}$ are defined in \eqref{eq:angle_d} and \eqref{eq:area_d}, respectively.}
\label{fig:Brain Reconstruct}
\end{figure}

\subsection{Extreme Case of Reconstruction}
In the case of the Stanford Bunny, the angle-preserving property of distortion-balancing mapping fails surface reconstruction of geometry images. This occurs because the points in the planar parameterization corresponding to the Bunny's ear are tightly clustered to maintain the triangle's angles. As a result, accurately reconstructing Bunny's ear necessitates an extremely high number of points, which is impractical in a geometry image.

To address this issue, we enhance the area-preserving property in distortion-balancing mapping, which helps maintain uniform point distribution in the planar domain, by modifying the constraint as follows,
\begin{equation}
\min E_C(\bf) ~~\text{subject to}~~ \mu E_A(\bf) = E_C (\bf)
\label{eq:mu Ea}
\end{equation}
for $\mu > 0$.
By increasing $\mu$, we emphasize the area-preserving property over the angle-preserving aspect in the distortion-balancing parameterization. This adjustment improves the ability to accurately reconstruct the Bunny's ear using the geometry image (see Fig. \ref{fig:Standford Bunny mu}).

Lastly, we present the result of the Stanford Bunny's surface reconstruction using a $100 \times 100$ geometry image with distortion-balancing ($\mu = 15$ in \eqref{eq:mu Ea}), authalic parameterization and the original surface, as shown in Fig. \ref{fig:Standford Bunny comparison}. Both distortion-balancing and authalic mapping successfully reconstruct the detail of the surface.

\begin{figure}[]
\centering
\resizebox{\textwidth}{!}{
\begin{tabular}{cccc}
$\mu = 1$ & $\mu = 5$ & $\mu = 10$ & $\mu = 15$\\
\includegraphics[height=2.5cm]{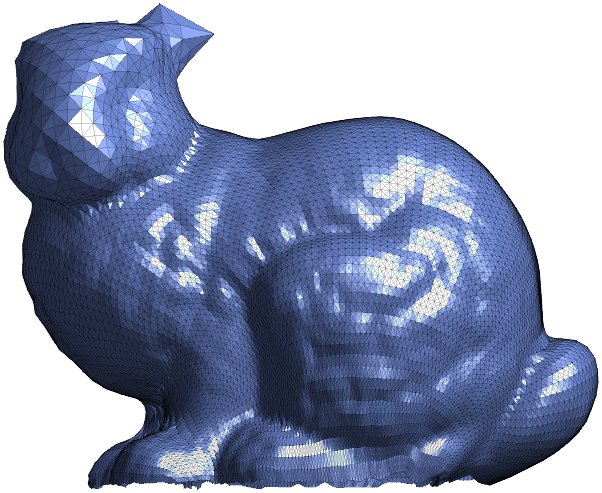} &
\includegraphics[height=2.8cm]{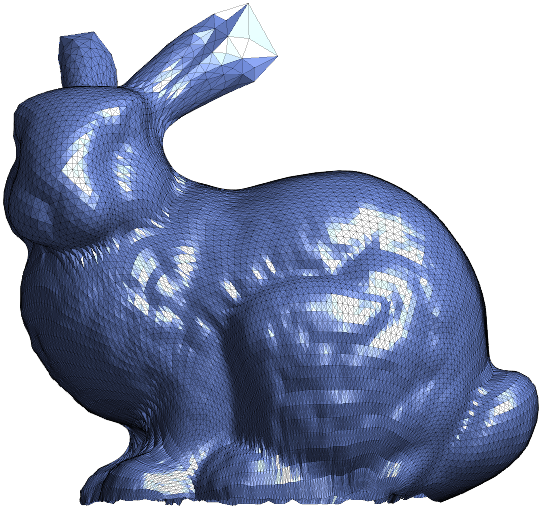} &
\includegraphics[height=3cm]{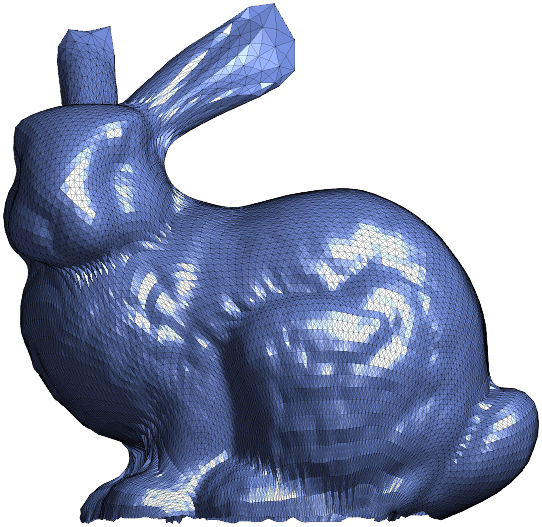} &
\includegraphics[height=3cm]{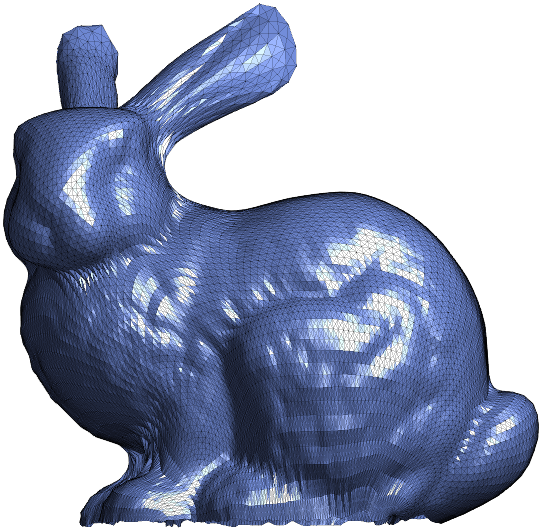} 
\end{tabular}
}
\caption{Surface reconstruction of the Stanford Bunny triangular mesh with the $100 \times 100$ geometry image by the minimization of the problem \eqref{eq:mu Ea} with different $\mu$.}
\label{fig:Standford Bunny mu}
\end{figure}

\begin{figure}[]
\centering
\resizebox{\textwidth}{!}{
\begin{tabular}{ccc}
Original & Distortion-Balancing & Authalic\\
\includegraphics[height=5cm]{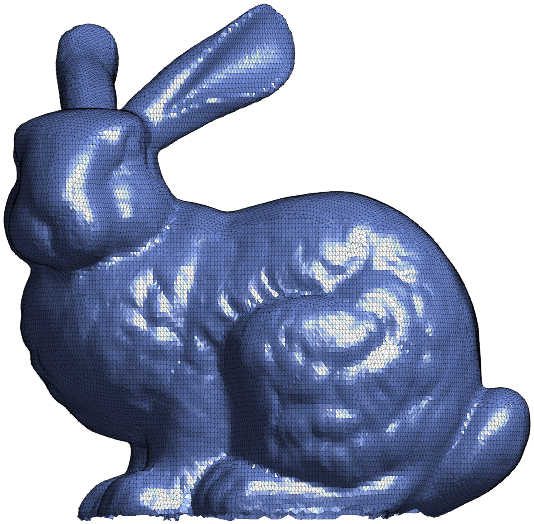} &
\includegraphics[height=5cm]{StanfordBunny_Remesh_BEM_15Ea.png} &
\includegraphics[height=5cm]{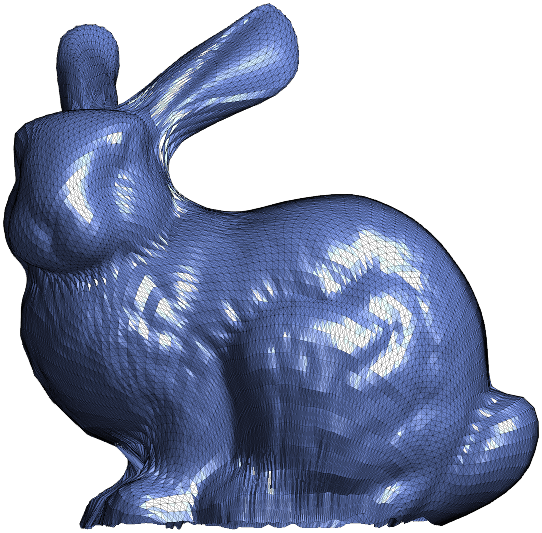} 
\end{tabular}
}
\caption{The original surface of the Stanford Bunny triangular mesh, reconstructed surface with $100 \times 100$ geometry image by distortion-balancing parameterization ($\mu = 15$ in \eqref{eq:mu Ea}), and reconstructed surface with $100 \times 100$ geometry image by authalic parameterization \cite{Yueh23}.}
\label{fig:Standford Bunny comparison}
\end{figure}

\section{Discussion}
\label{sec:7}
Different methods involving distortion-balancing parameterization employ various criteria to manage the trade-off between angular and area distortion. In this section, we compare our proposed method with other existing approaches and discuss the key differences in each method handling the trade-off.

In \cite{ChSh24}, the distortion-balancing mapping is obtained by forming a convex combination of the Beltrami coefficients from conformal and authalic mappings. The Beltrami coefficient, a complex-valued function, captures the local angular distortion of the mapping. However, this approach may not be ideal as it cannot capture the area distortion of mapping. Moreover, the combination coefficients are determined by minimizing the reconstruction error, which is the mean Hausdorff distance between the input and output meshes. This process can be numerically inefficient since it requires reconstruction at each iteration.

In \cite{NaSZ17}, the authalic mapping is obtained through optimal mass transportation (OMT) with an initial conformal mapping. The trade-off for balancing distortion is achieved by modifying the measure $\mu$ in the OMT process, in which the $\mu$ becomes a convex combination $\mu_t$ of the conformal image's area element and the original area element. Here, $\mu_0$ represents the area element induced by the conformal mapping, and $\mu_1$ represents the original area element. The suggested coefficient $t$ is 0.5, however, may be suboptimal. In addition, $\mu_t$ doesn't capture the angular distortion.

In \cite{Yueh23}, they minimized the balanced energy, which is formed by half of the conformal and authalic energy, as discussed in Sect. \ref{sec:5.2}. Although it captures both angular and area distortion, it may not be optimal in various triangular meshes (see Fig. \ref{fig:BEM_comparison}). Similarly, in \cite{YuHL20}, they form balance energy by convex combination of conformal and authalic energy with coefficient $\beta$ and the problem is formulated as $\max_\beta \min_f E_\beta(f)$, which may also not lead to well-balanced between angular and area distortion.

In contrast, our proposed method not only captures both angular and area distortion through conformal and authalic energy but also balances these distortions across various triangular meshes, offering greater flexibility and accuracy.

That said, the need for angle-preserving versus area-preserving properties can vary depending on the specific computational task. As discussed in Sect. \ref{sec:6}, area preservation may be more critical than angle preservation in applications like geometry images, particularly in extreme cases such as the Stanford Bunny triangular mesh (see Fig. \ref{fig:Standford Bunny mu}), where the authalic energy needs to be as low as one-fifteenth of the conformal energy.

\section{Concluding Remarks}
\label{sec:8}
We have proposed a new algorithm for computing disk-shaped parameterizations of simply connected open surfaces with equal conformal and authalic energy. This results in mappings that achieve an ideal trade-off between angular and area distortions. In addition, we have rigorously proven the global convergence of the algorithm to ensure that it consistently produces successful mappings.  Numerical experiments demonstrate that the algorithm produces bijective mappings with a well-balanced trade-off between angular and area accuracy across benchmark triangular meshes. Furthermore, we present the practical application of this balanced parameterization in representing surfaces as geometry images. In summary, the proposed formulation of distortion-balancing parameterization as a numerical optimization problem enables efficient and accurate computation of the mapping. The demonstrated effectiveness of the algorithm strongly supports its practical utility in real-world applications.

\appendix
\section{Initial Mapping by Fixed-Point Method}
In the previous work, Yueh et al. \cite{Yueh23} developed the fixed-point method to address the distortion-balancing parameterization. In our subproblem \eqref{eq:f update}, the fixed-point method is desirable for providing a good initial, who decrease energy rapidly initially. Numerically, we carry out 5 iterations tending to yield better efficiency.

Specifically, we drop the penalty term and recall the Lagrangian \eqref{eq:Lag} with fixed Lagrange multiplier $\lambda^{(k)}$ in $k$th iteration, denoted by
\[
\L_{\lambda^{(k)}} (\bf) \equiv \L(\bf, \lambda^{(k)}) =  E_C(\bf) + \lambda^{(k)} (E_A(\bf) - E_C(\bf)).
\]
Then, the gradient of $\L_{\lambda^{(k)}} (\bf)$ is
\[
\nabla_{\bf} \L_{\lambda^{(k)}} (\bf)  = (1 - \lambda^{(k)}) \nabla_{\bf}  E_C(\bf) + \lambda^{(k)} \nabla_{\bf}  E_A(\bf) \in \mathbb R^{n \times 2}. 
\]
To obtain the disk-shaped parameterization, the boundary points are selected by the arc length parameterization of $\partial \M$ and fixed during the iteration, denoted by $\bf_\B = \mathbbm b$. This implies $E_A$ and $E_C$ are equivalent to $E_S$ and $E_D$, respectively since the total area of the surface remains unchanged. Therefore, the gradient of $\L_{\lambda^{(k)}} (\bf)$ with respect to interior point $\bf_\I$ can be expressed by 
\[
\nabla_{\bf_\I} \L_{\lambda^{(k)}} (\bf)  = [L_{\lambda^{(k)}}(\bf)]_{\I, \I} \bf_\I + [L_{\lambda^{(k)}}(\bf)]_{\I, \B} \mathbbm b,
\] 
in which $\L_{\lambda^{(k)}}$ is the Laplacian, defined by
\begin{equation} \label{eq:L_B}
[L_{\lambda^{(k)}}(\bf)] = (1 - \lambda^{(k)}) L_D + 2\, \lambda^{(k)} L_S(\bf).
\end{equation}
The fixed-point method, inspired by the above gradient equal to zero, is the iterative method to solve 
\begin{equation} \label{eq:BEM}
    [L_{\lambda^{(k)}}(\bf^{(\ell)})]_{\I, \I} {\bf^s_\I}^{(\ell+1)} = - [L_{\lambda^{(k)}}(\bf^{(\ell)})]_{\I, \B} \mathbbm b^s
\end{equation} for $s = 1, 2$ in $\ell$th iteration, and $L_{\lambda^{(k)}}(\bf^{(0)}) = L_D$. The detail of the procedure is presented in Algorithm \ref{alg:BEM}

\begin{algorithm}[H]
\caption{Initial mapping with the fixed-point method}
\label{alg:BEM}
\begin{algorithmic}[1]
\Require A simply connected open triangular mesh $\M$, and a coefficient $\lambda \in [0,1]$. 
\Ensure A distortion-balancing initial mapping $\bf$.
\State Let $\I$ and $\B$ be defined as \eqref{eq:BI}.
\State Let $L = L_D$ as \eqref{eq:Ld}. 
\State Compute arc length boundary parameterization $\mathbbm b$.
\For{$k = 0, \cdots, 5$}. 
\State Solve linear systems $L_{\I,\I} \bf^s_{\I} = -L_{\I,\B} \mathbbm b^s$, for $s=1,2$.
\State Update $L$ by \eqref{eq:L_B}. 
\State $k \leftarrow k + 1$.
\EndFor
\end{algorithmic}
\end{algorithm}

\footnotesize
\bibliographystyle{abbrv}

\end{sloppypar}
\end{document}